\documentclass[12pt]{amsart}
\usepackage[cp1251]{inputenc}
\usepackage{a4wide,amsmath,verbatim,amsthm,amsfonts,amssymb,mathdots,mathrsfs,mathtools,enumitem,color}
\usepackage[hyperfootnotes=false,colorlinks=true,allcolors=blue]{hyperref}

\newcommand{\middlegamma}{\circ}
\newcommand{\genuinecharacter}{\nu}
\newcommand{\Weyl}{\mathrm}
\newcommand{\WeylCovering}{\underline}
\newcommand{\InertialSupp}{\mathrm{\Phi}}
\newcommand{\Ker}{\mathrm{Ker}}
\newcommand{\Image}{\mathrm{Im}}
\newcommand{\startGamma}{\mathrm{min}}
\newcommand{\dGamma}{d_{\Gamma}}
\newcommand{\composante}{A}
\newcommand{\countingGamma}{j}
\newcommand{\finitecyclic}{C}
\newcommand{\algebraic}{\mathbb}

\newcommand{\cntr}{Z}
\newcommand{\characF}{\operatorname{char}}

\newcommand{\id}{\operatorname{id}}
\newcommand{\Z}{\mathbb{Z}}

\newcommand{\GL}{\operatorname{GL}}

\newcommand{\rest}{\big |}

\newcommand{\wt}{\boldsymbol}
\newcommand{\InertialPairs}{\Xi(\wt{G})}

\newcommand{\abs}[1]{\left|{#1}\right|}

\theoremstyle{plain}
\newtheorem{theorem}{Theorem}

\newtheorem{lemma}[theorem]{Lemma}

\newtheorem{remark}[theorem]{Remark}

\numberwithin{theorem}{section}
\numberwithin{equation}{section}

\begin{document}
\title[Representation theory of central extensions]{A note on the representation theory of central extensions of reductive $p$-adic groups}

\subjclass[2010]{Primary 11F70; Secondary 11F55, 11F66, 22E50, 22E55}
\keywords{Representation theory, covering groups, $p$-adic groups}

\author{Eyal Kaplan}
\address{Department of Mathematics, Bar Ilan University, Ramat Gan 5290002, Israel}
\email{kaplaney@gmail.com}

\author{Dani Szpruch}
\address{Department of Mathematics and Computer Science, Open University of Israel, Raanana 43107, Israel}
\email{dszpruch@openu.ac.il}

\thanks{Kaplan was supported by the ISRAEL SCIENCE FOUNDATION (grant nos. 376/21 and 421/17).}

\begin{abstract}
In this note, we verify that several fundamental results from the theory of representations of reductive $p$-adic groups,
extend to finite central extensions of these groups.
\end{abstract}
\maketitle

\section{Introduction}

In recent years there has been an effort, by several authors, to extend the Langlands Program to the realm of central extensions of
reductive groups. When considering reductive groups over non-archimedean local fields, such extensions are $l$-groups, i.e., locally compact totally disconnected groups. Thus, an abundance of
basic representation theoretic results are immediately applicable, namely the results formulated in the literature for $l$-groups. Additional well known results
are however formulated in the context of reductive groups, and those must be read with care,
because the central extensions are not even algebraic groups. Fortunately, in many cases it turns out to be straightforward to extend these results.
Moreover, in some cases very few modifications are needed. The purpose of this note is to present several of these results.

The linear versions of the results we include have been obtained over decades of research by numerous authors.
We do not provide a historical account, since the list of authors would be very long and we prefer not to risk overlooking certain contributions. Our standard references will be the works of Bernstein and Zelevinsky \cite{BZ1,BZ2} and the recent book of Renard \cite{Renard2010} who provided a very comprehensive treatment of the subject. For some background on the study of central extensions in this context see Gan \textit{et al.} \cite{GanFanWeissman2018}.

We recall the familiar flow of the results in the linear case.
Let $G$ be a totally disconnected group. If $G$ is compact, its representation theory is fairly pleasant, for example (complex, smooth) irreducible representations
are finite-dimensional and unitary, and the category of representations of $G$ is semi-simple.
The picture is far more complicated in the locally compact case. However, certain representations called finite or compact, namely representations with compactly supported matrix coefficients, still enjoy convenient properties as in the compact case. Under certain additional mild conditions on $G$, they are admissible. Clearly though, no such representations exist
unless the center $\cntr_G$ of $G$ is compact.

Let now $G$ denote the group of $F$-points of a reductive group defined over a local non-archimedean field $F$.
In order to handle the center, we can find a subgroup $G^0$ in $G$ which is normal, has a compact center and such that $\cntr_GG^0$ is of finite index in $G$.
A representation of $G$ is called supercuspidal if it restricts to a finite representation of $G^0$. In particular, supercuspidal representations are
admissible and unitary.

The properties of arbitrary irreducible representations of $G$ can now be studied by relating them, in some sense, to the supercuspidal ones.
Specifically, for a parabolic subgroup $P=M\ltimes U$ of $G$, we have the parabolic induction functor $i_{U,1}$ and the Jacquet functor $r_{U,1}$.
These functors satisfy several useful properties, e.g., they are exact and $r_{U,1}$ is left adjoint to $i_{U,1}$.
A supercuspidal representation $\rho$ can now be defined by the vanishing of $r_{U,1}(\rho)$ for all proper parabolic subgroups $P$.

In general, any irreducible representation $\pi$ can then be embedded in $i_{U,1}(\rho)$ for some supercuspidal $\rho$ and parabolic subgroup $P$. In this manner we reduce the study of $\pi$ to the study of $\rho$, using the properties of the functors $i_{U,1}$ and $r_{U,1}$. For example, using the Iwasawa decomposition of $G$, it is simple to deduce that
$i_{U,1}$ preserves admissibility, thus $\pi$ is admissible. The fact that $r_{U,1}$ preserves admissibility is also true, and the proof is based on the Iwahori factorization of small
compact neighborhoods $\Gamma$ of the identity in $G$. The fact that irreducible representations are admissible implies that representations of finite length are both admissible and finitely generated. The other direction then follows using an analysis of spaces of $\Gamma$-invariants. As a consequence we deduce that $r_{U,1}$ carries finite length representations into finite length representations. In order to show the similar property for $i_{U,1}$ one reduces to the supercuspidal case, where it is a consequence of the geometric lemma.

Consider a central extension $\wt{G}$ of $G$ by a finite abelian group $C$. Several key geometric properties are immediately inherited
from $G$. The preimage $\wt{G^0}$ of $G^0$ in $\wt{G}$ remains useful because it preserves the properties from the linear case.
Hence it is not surprising that the theory of representations of $\wt{G^0}$ plays a fundamental role here as well. In addition, because $\wt{G}$ is split (in a certain unique way) upon restriction to
any unipotent subgroup of $G$, the functors $i_{U,1}$ and $r_{U,1}$ can be applied as in the linear case.
Hecke algebras, which play an important role in the linear case, e.g., in the proof of uniform admissibility, are defined similarly, but their structure is (slightly) more delicate because the lifting of neighborhoods of the identity from $G$ to $\wt{G}$ is not canonical.

\section{Groups and central extensions}\label{Central extensions}

Let $F$ be a non-archimedean local field. Denote the characteristic of $F$ by $\characF{F}$.
Let $\algebraic{G}$ be a connected reductive algebraic group defined over $F$ and let $\finitecyclic$ be a finite abelian group.
Let $\wt{G}$ be a topological central extension of $G=\algebraic{G}(F)$ by $\finitecyclic$.
This means that there exists a short exact sequence of topological groups
\begin{align*}
1\rightarrow \finitecyclic\xrightarrow{i} \wt{G}\xrightarrow{p} G\rightarrow 1,
\end{align*}
where $i(\finitecyclic)$ is central and $p$ is a topological covering (see e.g., \cite[p.~2]{MW2}).
This sequence necessarily splits upon restriction to a sufficiently small open subgroup of $G$.
See e.g., Ban and Jantzen \cite[Lemma~2.2]{BJ} and we mention that the restriction on the characteristic of the field in
\cite{BJ} was removed in \cite{BanJantzen2016}. Thus, $\wt{G}$ is an $l$-group.

\begin{remark}
According to the classical work of Mackey \cite{Mackey1957}, the group $\wt{G}$ can be defined by a $2$-cocycle on $G$, i.e., a Borel measurable function $\sigma:G\times G\rightarrow \finitecyclic$ such that
\begin{align*}
\sigma(g,g')\sigma(gg',g'')=\sigma(g,g'g'')\sigma(g',g''),\qquad \sigma(g,e)=\sigma(e,g')=1,\qquad\forall g,g',g''\in G,
\end{align*}
where $e$ is the identity element of $G$. The elements of $\wt{G}$ can then be written as $\langle g,\epsilon\rangle$ with $g\in G$ and $\epsilon\in \finitecyclic$, and
$\langle g,\epsilon\rangle\langle g',\epsilon'\rangle=\langle gg',\epsilon\epsilon'\sigma(g,g')\rangle$. This work will not however require us to use $2$-cocycles.
\end{remark}

For any group $H$, a subgroup $H_0<H$ and $h,h'\in H$, we write ${}^hh'=hh'h^{-1}$ and
${}^hH_0=\{{}^hh_0: h_0\in H_0\}$. Since $\wt{G}$ is a central extension of $G$, the group $G$ acts on $\wt{G}$ by conjugation.
Also for any $d\in\Z$, denote $H^d=\{h^d: h\in H\}$.

The projection $p$ is a proper map, i.e., the preimage under $p$ of a compact subset is compact.
For any subset $S\subset G$ we denote by $\wt{S}=p^{-1}(S)$ its preimage under $p$.
If $H$ is a subgroup of $G$, a lifting of $H$ is a continuous embedding of groups $\varsigma_H:H\rightarrow\wt{H}$ such that $p\circ \varsigma_H=\id_H$.
Let $L<G$ be another subgroup. Note that $L$ normalizes $H$ if and only if $\wt{L}$ normalizes $\wt{H}$. Assume that this is the case.
Then, we say that $\varsigma_H$ is $L$-equivariant if ${}^l\varsigma_{H}(h)=\varsigma_H({}^{l}h)$ for all $l\in L$.

By M{\oe}glin and Waldspurger \cite[Appendix~I]{MW2} (which is also applicable locally, see \cite[p.~277]{MW2}), for any unipotent algebraic subgroup $\algebraic{U}$ of $\algebraic{G}$, $\algebraic{U}(F)$ lifts to $\wt{G}$.
This lifting is unique when $\characF{F}=0$ or $\characF{F}$ does not divide $\abs{\finitecyclic}$, where $\abs{\finitecyclic}$ is the order of $\finitecyclic$.
Moreover, let $\algebraic{P}$  be a parabolic subgroup of $\algebraic{G}$ defined over $F$ with a unipotent radical $\algebraic{U}$, and denote
$P=\algebraic{P}(F)$ and $U=\algebraic{U}(F)$. By \textit{loc. cit.} there is a unique $P$-equivariant lifting of $U$ which we denote by $\varsigma_{U}:U\mapsto\wt{G}$.
We therefore identify $U$ as a subgroup of $\wt{G}$ via $\varsigma_{U}$.
For any $g\in G$ we have $\varsigma_{{}^gU}(u)={}^{g}(\varsigma_U(u))$ for all $u\in U$.

We denote the center of a group $H$ by $\cntr_H$. For any $H<G$, we have $\cntr_{\wt H}<\wt{\cntr_H}$
($\cntr_{\wt{H}}=\cntr_{p^{-1}(H)}$, $\wt{\cntr_H}=p^{-1}(\cntr_H)$).
For any abelian subgroup $A<G$, we have $\wt{A^{\abs{\finitecyclic}}}=p^{-1}(A^{\abs{\finitecyclic}})<\cntr_{\wt{A}}$ (\cite[\S~I.1.3(2)]{MW2}).
A maximal abelian subgroup of $\wt G$ is necessarily of the form $\wt A$ for $A<G$.

Fix a maximal $F$-split torus $\algebraic{T}_0$ in $\algebraic{G}$ and a minimal parabolic subgroup $\algebraic{P}_0$ of $\algebraic{G}$, which is defined over $F$ and contains $\algebraic{T}_0$.
Denote $T_0=\algebraic{T}_0(F)$ and $P_0=\algebraic{P}_0(F)$.
Let $M_0$ be the centralizer of $T_0$ in $G$ and let $U_0$ denote the unipotent radical of $P_0$.
Thus, $P_0=M_0\ltimes U_0$ is a Levi decomposition of $P_0$.
Let $T_0^+$ (resp., $T_0^{++}$) be the dominant (resp., strictly dominant) part of $T_0$.
Let $K$ be a good maximal compact subgroup of $G$ with respect to $P_0$ (see \cite[\S~3.5]{C}, \cite[\S~V.5.1]{Renard2010}).
In particular, the Cartan decomposition $G=KM_0K$ holds.

Henceforth we say that $P$ is a parabolic subgroup of $G$ if $P=\algebraic{P}(F)$ for a parabolic subgroup $\algebraic{P}$ of $\algebraic{G}$ defined over $F$,
and the notation $P=M\ltimes U$ implies $M$ is a Levi subgroup of $P$ and $U$ is the unipotent radical of $P$. A parabolic subgroup $P$ is called standard if $P_0<P$.
Let $M^0=\bigcap_{\chi}\mathrm{ker}|\chi|$ where $\chi$ varies over the rational characters of $M$ (\cite[\S~V.2.3]{Renard2010}).
Denote by $\composante_M$ the maximal split torus in $\cntr_M$, then $\composante_M^+$ and $\composante_M^{++}$ are defined relative to $P$
(see \cite[\S~V.3, \S~V.3.21]{Renard2010}). Denote by $P^-=M\ltimes U^-$ the opposite parabolic subgroup with respect to $M$, where
$U^-$ is the unipotent subgroup opposite to $U$ (which is the unipotent radical of $P^-$).

The Weyl group $W$ of $G$, defined with respect to $T_0$, is the quotient of the normalizer of $T_0$ in $G$ by the centralizer of $T_0$ in $G$.
For each $w\in W$ we can choose a representative $\Weyl{w}\in G$, then a representative $\WeylCovering{\Weyl{w}}\in \wt{G}$ such that $p(\WeylCovering{\Weyl{w}})=\Weyl{w}$. We then have the Bruhat decomposition $\wt{G}=\coprod_{w\in W}\wt{B}\WeylCovering{\Weyl{w}}\wt{B}$, and a similar decomposition for any standard parabolic subgroups $P,Q<G$ (see \cite[Lemma~2.6]{BJ}).

We will consider a sequence of compact open subgroups $\Gamma_{\countingGamma}$, $\countingGamma\ge0$,
which forms a basis of neighborhoods of the identity in $G$. Each $\Gamma_{\countingGamma}$ is a compact open normal subgroup of $K$ and
admits an Iwahori factorization
$\Gamma_{\countingGamma}=\Gamma_{\countingGamma}^{\startGamma-}\Gamma_{\countingGamma}^{\startGamma,\middlegamma}\Gamma_{\countingGamma}^{\startGamma,+}$ where $\Gamma_{\countingGamma}^{\startGamma,-}=\Gamma_{\countingGamma}\cap U_0^-$, $\Gamma_{\countingGamma}^{\startGamma,\middlegamma}=\Gamma_{\countingGamma}\cap M_0$ and $\Gamma_{\countingGamma}^{\startGamma,+}=\Gamma_{\countingGamma}\cap U_0$. Moreover, $\Gamma_{\countingGamma}^{\startGamma,\middlegamma}$ is normalized by $\cntr_{M_0}$, and $\Gamma_{\countingGamma}^{\startGamma,+}$ (resp., $\Gamma_{\countingGamma}^{\startGamma,-}$) is normalized by $\composante_M^+$ (resp., $(\composante_M^+)^{-1}$). (See e.g., \cite[\S~V.5.2]{Renard2010}.)

Fix $\countingGamma_0$ such that $\Gamma_{\countingGamma_0}$ lifts to $\wt G$ and fix a lifting $\eta$ of $\Gamma_{\countingGamma_0}$. Note that $\eta$ is not necessarily unique.
\begin{lemma}\label{lemma:props of neighborhoods}
There exists a $\countingGamma_1>\countingGamma_0$ such that for all $\countingGamma\ge \countingGamma_1$ the following holds:
\begin{enumerate}[leftmargin=*]
\item\label{it:equivariance part 1} $\eta\rest_{\Gamma_{\countingGamma}}$ is $K$-equivariant, $\eta\rest_{\Gamma_{\countingGamma}^{\startGamma,+}}=\varsigma_{U_0}$, $\eta\rest_{\Gamma_{\countingGamma}^{\startGamma,-}}=\varsigma_{U_0^-}$ and
$\eta\rest_{\Gamma_{\countingGamma}^{\startGamma,\middlegamma}}$ is $M_0$-equivariant.
\item\label{it:equivariance part 2} More generally if $P=M\ltimes U<G$ is a standard parabolic subgroup, then $\eta\rest_{\Gamma_{\countingGamma}\cap U}=\varsigma_{U}$,
$\eta\rest_{\Gamma_{\countingGamma}\cap U^-}=\varsigma_{U^-}$ and $\eta\rest_{\Gamma_{\countingGamma}\cap M}$ is $\composante_M$-equivariant.
\end{enumerate}
\end{lemma}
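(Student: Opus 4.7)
The plan is to reduce every claim to a single elementary principle: any two continuous liftings $\varsigma_1,\varsigma_2\colon H\to\wt G$ of a subgroup $H<G$ differ by a continuous character $\chi\colon H\to\finitecyclic$, defined by $\chi(h)=\varsigma_1(h)\varsigma_2(h)^{-1}$. Indeed, $\chi(h)\in\finitecyclic$ because both factors project to $h$, and $\chi$ is a homomorphism because $\finitecyclic$ is central in $\wt G$. Since $\finitecyclic$ is finite and discrete, $\ker\chi$ is an open subgroup of $H$, and therefore contains $\Gamma_{\countingGamma}\cap H$ for every sufficiently large $\countingGamma$. The proof will amount to applying this principle to a finite collection of pairs of liftings and taking $\countingGamma_1$ to be the maximum of the resulting thresholds.

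For the identifications $\eta\rest_{\Gamma_{\countingGamma}^{\startGamma,+}}=\varsigma_{U_0}$ and $\eta\rest_{\Gamma_{\countingGamma}^{\startGamma,-}}=\varsigma_{U_0^-}$ in part~(\ref{it:equivariance part 1}), and likewise $\eta\rest_{\Gamma_{\countingGamma}\cap U}=\varsigma_U$ and $\eta\rest_{\Gamma_{\countingGamma}\cap U^-}=\varsigma_{U^-}$ in part~(\ref{it:equivariance part 2}), the principle applies immediately, by comparing $\eta$ restricted to the appropriate intersection with the canonical $P$-equivariant lifting of the corresponding unipotent radical recalled before the lemma.

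For the $K$-equivariance of $\eta\rest_{\Gamma_{\countingGamma}}$, I would use the normality of $\Gamma_{\countingGamma_0}$ in $K$: for each $k\in K$, the map $\gamma\mapsto{}^{k^{-1}}\eta({}^k\gamma)$ is another continuous lifting of $\Gamma_{\countingGamma_0}$, and comparing with $\eta$ yields a continuous character $\chi_k\colon\Gamma_{\countingGamma_0}\to\finitecyclic$. The joint map $(k,\gamma)\mapsto\chi_k(\gamma)$ is continuous on $K\times\Gamma_{\countingGamma_0}$ and vanishes on $K\times\{e\}$; since $\finitecyclic$ is discrete, the preimage of $1$ is clopen and, by compactness of $K$ and the tube lemma, contains $K\times V$ for some open neighborhood $V$ of the identity. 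Then $\chi_k|_{\Gamma_{\countingGamma}}\equiv 1$ for every $k\in K$ as soon as $\Gamma_{\countingGamma}\subset V$, which is exactly $K$-equivariance.

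The equivariance assertions on the Levi pieces — $M_0$-equivariance of $\eta\rest_{\Gamma_{\countingGamma}^{\startGamma,\middlegamma}}$ in~(\ref{it:equivariance part 1}), and $\composante_M$-equivariance of $\eta\rest_{\Gamma_{\countingGamma}\cap M}$ in~(\ref{it:equivariance part 2}) — will be handled analogously. The compact pieces $M_0\cap K$ and $\composante_M\cap K$ fall out immediately by restricting the $K$-equivariance just established. For the remaining non-compact split directions, which normalize the relevant compact subgroups by the Iwahori-factorization properties recalled before the lemma, the cocycle $z\mapsto\chi_z$ should factor through a finite quotient using $\wt{A^{\abs{\finitecyclic}}}\subset\cntr_{\wt A}$ for every abelian $A<G$; finitely many coset representatives then reduce the problem to the same open-kernel argument. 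This uniformization in the non-compact directions, where compactness of $K$ is no longer available, is the main technical obstacle.
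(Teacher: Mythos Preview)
Your approach is correct and matches the paper's overall strategy: reduce everything to the principle that two liftings of the same subgroup differ by a continuous character into $\finitecyclic$ with open kernel, then treat each equivariance claim by reducing to finitely many instances of this principle. The implementation differs in two places. For $K$-equivariance the paper uses that $[K:\Gamma_{\countingGamma}]<\infty$ (finitely many coset representatives, combined with the automatic $\Gamma_{\countingGamma_0}$-equivariance that comes from $\eta$ being a homomorphism) rather than your tube-lemma argument; your compactness argument is arguably cleaner. For the split-torus direction the paper uses that $T_0/(T_0\cap\Gamma_{\countingGamma_0}^{\startGamma,\middlegamma})$ is \emph{finitely generated} (so it suffices to kill $\chi_t$ on finitely many generators), rather than your observation that $t\mapsto\chi_t$ is a homomorphism into a group of exponent $\abs{\finitecyclic}$ and hence kills $T_0^{\abs{\finitecyclic}}$; both work, though yours needs to be combined with the compact part to get a genuinely finite quotient. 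For the passage from $T_0$ to $M_0$, the paper invokes the finiteness of $M_0/(\Gamma_{\countingGamma_0}^{\startGamma,\middlegamma}T_0)$ (via $M_0/M_0^0T_0$ finite and $M_0^0$ compact), which is the finite-coset reduction you anticipate. One technical point you should make explicit there: the representatives $m_l$ need not normalize $\Gamma_{\countingGamma_0}^{\startGamma,\middlegamma}$, so one first passes to the finite intersection $\Gamma'=\bigcap_l{}^{m_l}\Gamma_{\countingGamma_0}^{\startGamma,\middlegamma}$ before running the open-kernel argument, as the paper does.
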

\begin{proof}
Consider $k\in K$. Since $K$ normalizes $\Gamma_{\countingGamma}$, the map $x\mapsto{}^{k^{-1}}\eta({}^kx)$ is a splitting of $\Gamma_{\countingGamma}$, and because
any $2$ splittings of $\Gamma_{\countingGamma}$ give rise to a homomorphism $\Gamma_{\countingGamma}\rightarrow \finitecyclic$, there exists $\countingGamma\gg \countingGamma_0$ such that ${}^{k^{-1}}\eta({}^kx)=\eta(x)$ for all $x\in\Gamma_{\countingGamma}$. Hence ${}^{kx'}\eta(x)=\eta({}^{kx'}x)$ for all $x,x'\in\Gamma_{\countingGamma}$. Since the index of $\Gamma_{\countingGamma}$ in $K$ is finite, there is $\countingGamma\gg \countingGamma_0$ such that ${}^{k}\eta(x)=\eta({}^{k}x)$ for all $k\in K$. Thus
$\eta\rest_{\Gamma_{\countingGamma}}$ is $K$-equivariant.

The assertions $\eta\rest_{\Gamma_{\countingGamma}^{\startGamma,+}}=\varsigma_{U_0}$ and $\eta\rest_{\Gamma_{\countingGamma}^{\startGamma,-}}=\varsigma_{U_0^-}$, and in the setup of part~\eqref{it:equivariance part 2} also $\eta\rest_{\Gamma_{\countingGamma}\cap U}=\varsigma_{U}$ and $\eta\rest_{\Gamma_{\countingGamma}\cap U^-}=\varsigma_{U^-}$, are contained in the proof of \cite[Proposition~2.11]{BJ}.

We show that $\eta\rest_{\Gamma_{\countingGamma}^{\startGamma,\middlegamma}}$ is $T_0\Gamma_{\countingGamma}^{\startGamma,\middlegamma}$-equivariant.
Since $T_0$ normalizes $\Gamma_{\countingGamma}^{\startGamma,\middlegamma}$, for each $t\in T_0$, $x\mapsto{}^{t^{-1}}\eta({}^tx)$ is a splitting of $\Gamma_{\countingGamma}^{\startGamma,\middlegamma}$.
As above, because any $2$ splittings of $\Gamma_{\countingGamma}^{\startGamma,\middlegamma}$ give a homomorphism $\Gamma_{\countingGamma}^{\startGamma,\middlegamma}\rightarrow \finitecyclic$, and
since $T_0/ (T_0\cap \Gamma_{\countingGamma}^{\startGamma,\middlegamma})$ is finitely generated, we deduce that $\eta\rest_{\Gamma_{\countingGamma}^{\startGamma,\middlegamma}}$ is $T_0$-equivariant
for $\countingGamma\gg \countingGamma_0$. Hence it is also $T_0\Gamma_{\countingGamma}^{\startGamma,\middlegamma}$-equivariant. The same argument shows that,
in the setup of part~\eqref{it:equivariance part 2}, $\eta\rest_{\Gamma_{\countingGamma}\cap M}$ is $\composante_M$-equivariant.

To conclude that $\eta\rest_{\Gamma_{\countingGamma}^{\startGamma,\middlegamma}}$ is $M_0$-equivariant, first note that
$M_0/(\Gamma_{\countingGamma_0}^{\startGamma,\middlegamma}T_0)$ is a finite set, because
$M_0/M_0^0T_0$ is finite (see \cite[Proposition~V.2.6]{Renard2010} and \cite[V.3.8]{Renard2010}) and $M_0^0$ is compact (see e.g.,
\cite[p.~241]{W}).
Take a set of representatives $\{m_l\}$ for $M_0/(\Gamma_{\countingGamma_0}^{\startGamma,\middlegamma}T_0)$ which includes the identity element,
and put
$\Gamma'=\cap_l({}^{m_l}\Gamma_{\countingGamma_0}^{\startGamma,\middlegamma})$. Since $\Gamma'<\Gamma_{\countingGamma_0}^{\startGamma,\middlegamma}$, $\eta$ is a splitting of
$\Gamma'$. Let $\countingGamma\gg \countingGamma_0$ be such that $\Gamma_{\countingGamma}^{\startGamma,\middlegamma}<\Gamma'$. Then
$x\mapsto{}^{m_l^{-1}}\eta({}^{m_l}x)$ is a splitting of
$\Gamma_{\countingGamma}^{\startGamma,\middlegamma}$. As above when $\countingGamma$ is sufficiently large, ${}^{m_l}\eta(x)=\eta({}^{m_l}x)$ for all $l$ and $x\in \Gamma_{\countingGamma}^{\startGamma,\middlegamma}$. Thus $\eta\rest_{\Gamma_{\countingGamma}^{\startGamma,\middlegamma}}$ is $M_0$-equivariant.
\end{proof}
Assume that $\countingGamma\ge \countingGamma_1$.

\begin{lemma}\label{lemma:g normalizes the stabilizer Gamma l^g}
For any $g\in G$ and $x\in{}^g\Gamma_{\countingGamma}\cap\Gamma_{\countingGamma}$, we have
${}^{g^{-1}}\eta(x)=\eta({}^{g^{-1}}x)$. Moreover, ${}^g\eta(\Gamma_{\countingGamma})\cap\eta(\Gamma_{\countingGamma})=\eta({}^g\Gamma_{\countingGamma}\cap\Gamma_{\countingGamma})$.
\end{lemma}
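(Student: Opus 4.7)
The plan is to exploit the Cartan decomposition $G=KM_0K$ to reduce the first assertion to the case $g\in M_0$, and then to dispatch that case via the Iwahori factorization together with the three equivariance properties supplied by Lemma~\ref{lemma:props of neighborhoods}.

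Write $g=k_1mk_2$ with $k_1,k_2\in K$ and $m\in M_0$. Since $K$ normalizes $\Gamma_{\countingGamma}$, the element $y={}^{k_1^{-1}}x$ lies in $\Gamma_{\countingGamma}$, and the $K$-equivariance of $\eta\rest_{\Gamma_{\countingGamma}}$ yields ${}^{k_1^{-1}}\eta(x)=\eta(y)$. Similarly, setting $z={}^{m^{-1}}y={}^{k_2g^{-1}}x$, the assumption ${}^{g^{-1}}x\in\Gamma_{\countingGamma}$ together with $K$-normalization of $\Gamma_{\countingGamma}$ forces $z\in\Gamma_{\countingGamma}$. Consequently, it suffices to prove that ${}^{m^{-1}}\eta(y)=\eta(z)$ whenever $m\in M_0$ and $y,{}^{m^{-1}}y\in\Gamma_{\countingGamma}$; applying $K$-equivariance once more at $k_2^{-1}$ then delivers the statement for $g$.

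For the $M_0$ case, decompose $y=y^-y^{\middlegamma}y^+$ via the Iwahori factorization, so $y^{\pm}\in\Gamma_{\countingGamma}^{\startGamma,\pm}$ and $y^{\middlegamma}\in\Gamma_{\countingGamma}^{\startGamma,\middlegamma}$. Since $m\in M_0$ normalizes each of $U_0$, $M_0$ and $U_0^-$, conjugation by $m^{-1}$ sends this to a decomposition of ${}^{m^{-1}}y$ inside $U_0^-\cdot M_0\cdot U_0$, and the uniqueness of such a decomposition combined with ${}^{m^{-1}}y\in\Gamma_{\countingGamma}$ forces each of ${}^{m^{-1}}y^{\pm}$ and ${}^{m^{-1}}y^{\middlegamma}$ to lie in the corresponding Iwahori factor of $\Gamma_{\countingGamma}$. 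Lemma~\ref{lemma:props of neighborhoods}~\eqref{it:equivariance part 1} identifies $\eta$ on $\Gamma_{\countingGamma}^{\startGamma,\pm}$ with the $P_0^{\pm}$-equivariant liftings $\varsigma_{U_0^{\pm}}$, and supplies $M_0$-equivariance of $\eta\rest_{\Gamma_{\countingGamma}^{\startGamma,\middlegamma}}$. Conjugating each of the three factors by $m^{-1}$ and recalling that $\eta$ is a homomorphism on $\Gamma_{\countingGamma}$ then gives ${}^{m^{-1}}\eta(y)=\eta({}^{m^{-1}}y)$.

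The second assertion follows formally. For $y\in{}^g\Gamma_{\countingGamma}\cap\Gamma_{\countingGamma}$, the first part applied to $x={}^{g^{-1}}y$ yields $\eta(y)={}^g\eta(x)\in{}^g\eta(\Gamma_{\countingGamma})\cap\eta(\Gamma_{\countingGamma})$. Conversely, any element of ${}^g\eta(\Gamma_{\countingGamma})\cap\eta(\Gamma_{\countingGamma})$ has the form $\eta(y)={}^g\eta(x)$ with $x,y\in\Gamma_{\countingGamma}$; projecting to $G$ yields $y={}^gx$, so $y\in{}^g\Gamma_{\countingGamma}\cap\Gamma_{\countingGamma}$. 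The main delicate point in the whole argument is the matching of Iwahori components: one must verify that the conjugates ${}^{m^{-1}}y^{\pm}$ and ${}^{m^{-1}}y^{\middlegamma}$ actually land in $\Gamma_{\countingGamma}^{\startGamma,\pm}$ and $\Gamma_{\countingGamma}^{\startGamma,\middlegamma}$ rather than merely in $U_0^{\pm}$ and $M_0$, and this is precisely where uniqueness of the Iwahori factorization of ${}^{m^{-1}}y\in\Gamma_{\countingGamma}$ is indispensable; the remaining bookkeeping is routine equivariance juggling.
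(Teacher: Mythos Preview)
Your proof is correct and follows essentially the same route as the paper's: reduce via the Cartan decomposition $G=KM_0K$ and $K$-equivariance of $\eta$ to the case $g\in M_0$, then handle that case component-by-component via the Iwahori factorization and the equivariance properties of Lemma~\ref{lemma:props of neighborhoods}. You spell out the reduction step and the uniqueness-of-Iwahori-factorization argument more explicitly than the paper (which simply asserts $x^{*}\in{}^{g}\Gamma_{\countingGamma}^{\startGamma,*}\cap\Gamma_{\countingGamma}^{\startGamma,*}$ directly), but the substance is identical.
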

\begin{proof}
For the first assertion, by the Cartan decomposition, it is enough to verify
it for $g\in M_0$. By the Iwahori factorization we can write $x=x^-x^{\middlegamma}x^+$ where $x^-\in{}^g\Gamma_{\countingGamma}^{\startGamma,-}\cap\Gamma_{\countingGamma}^{\startGamma,-}$,
$x^{\middlegamma}\in{}^g\Gamma_{\countingGamma}^{\startGamma,\middlegamma}\cap\Gamma_{\countingGamma}^{\startGamma,\middlegamma}$ and $x^+\in{}^g\Gamma_{\countingGamma}^{\startGamma,+}\cap\Gamma_{\countingGamma}^{\startGamma,+}$, whence it is enough to check it separately for
$x\in \Gamma_{\countingGamma}^{\startGamma,*}$ where $*\in\{-,\middlegamma,+\}$. This follows from Lemma~\ref{lemma:props of neighborhoods}.
The second assertion follows from the first.
\end{proof}

\section{Basic representation theory}\label{Basic representation theory}

We recall that \cite[Ch.~I]{BZ1} was formulated for arbitrary $l$-groups,
and is thus immediately applicable to $\wt{G}$. The bulk of \cite[Ch.~II]{BZ1} and \cite{BZ2,Renard2010} must be adapted with more care, because it was formulated for reductive groups.

Let $\mathrm{Rep}(\wt{G})$ denote the category of representations of $\wt{G}$. In this work representations are always complex and smooth.
Let $S(\wt{G})$ denote the space of Schwartz functions on $\wt{G}$,
$S^*(\wt{G})$ be the space of distributions on $S(\wt{G})$ and
$S_c^*(\wt{G})$ be the subspace of finite distributions (\cite[1.7, 1.10]{BZ1}). One can define a convolution operation on $S_c^*(\wt{G})$, turning it into an associative algebra with a unit (\cite[1.24, 1.25]{BZ1}).
Since $\finitecyclic$ is finite and central, $S(\wt{G})=\oplus_{\genuinecharacter}S_{\genuinecharacter}(\wt{G})$ where
$\genuinecharacter$ varies over the characters of $\finitecyclic$ and $S_{\genuinecharacter}(\wt{G})$ is the subspace of $(\finitecyclic,\genuinecharacter)$-equivariant functions.

\textbf{The Hecke algebras}.
The Hecke algebra of $\wt{G}$ is the convolution subalgebra $\mathcal{H}(\wt{G})\subset S_c^*(\wt{G})$ consisting of the locally constant finite distributions on $\wt{G}$. The space $S(\wt{G})$ is isomorphic to $\mathcal{H}(\wt{G})$ via $f\mapsto f \mu_{\wt{G}}$ where $\mu_{\wt{G}}$ is a Haar measure on $\wt{G}$ (\cite[1.29]{BZ1}).
Fix $\Gamma=\Gamma_{\countingGamma}$, with $\countingGamma\geq\countingGamma_1$ (see Lemma~\ref{lemma:props of neighborhoods}). Let $\chi_{\eta(\Gamma)}\in S(\wt{G})$ be the characteristic function of $\eta(\Gamma)$,
$c_{\Gamma}=(\int_{\eta(\Gamma)}d\mu_{\wt{G}}(x))^{-1}$ and $e_{\eta(\Gamma)}=c_{\Gamma}\chi_{\eta(\Gamma)}\mu_{\wt{G}}$. Denote
$\mathcal{H}_{\Gamma}=\mathcal{H}_{\Gamma}(\wt{G})=e_{\eta(\Gamma)}*\mathcal{H}(\wt{G})*e_{\eta(\Gamma)}$
(\cite[1.26, 2.10]{BZ1}), which is called the Hecke algebra of $\wt{G}$ with respect to $\eta(\Gamma)$. The functor of $\eta(\Gamma)$-invariants, i.e., the functor attaching to $\pi\in \mathrm{Rep}(\wt{G})$ the space $\pi^{\eta(\Gamma)}$ of $\eta(\Gamma)$-invariants, is exact (\cite[2.4]{BZ1}) and induces a bijection between isomorphism classes of irreducible representations of $\wt{G}$ whose space of $\eta(\Gamma)$-invariants
is nonzero and irreducible representations of $\mathcal{H}_{\Gamma}$ (\cite[2.10]{BZ1}). In addition, by the remark at the end of \cite[II.3.12]{Renard2010}
$\mathcal{H}_{\Gamma}$ is identified with the algebra of compactly supported $\eta(\Gamma)$-bi-invariant complex-valued functions on $\wt{G}$.

For $g\in\wt{G}$ set $\overline{g}=e_{\eta(\Gamma)}*e_{g}*e_{\eta(\Gamma)}\in\mathcal{H}_{\Gamma}$ ($e_{g}$ is the Dirac distribution, see \cite[1.7]{BZ1}). Also let $\chi_{\eta(\Gamma)g\eta(\Gamma)}\in S(\wt{G})$ denote the function
supported on $\eta(\Gamma)g\eta(\Gamma)$ which is left- and right-$\eta(\Gamma)$-invariant, and
$\chi_{\eta(\Gamma)g\eta(\Gamma)}(g)=1$.

As explained in \cite[II.3.12]{Renard2010}, the preimage of $\overline{g}$ under the map $S(\wt{G})\rightarrow\mathcal{H}(\wt{G})$ is a scalar multiple
of $\chi_{\eta(\Gamma)g\eta(\Gamma)}\mu_{\wt{G}}$. We compute this scalar using the compatibility results from Lemma~\ref{lemma:g normalizes the stabilizer Gamma l^g}.
\begin{lemma}\label{lemma:e_g}
$\overline{g}=c_{\Gamma}^2\dGamma(g)\chi_{\eta(\Gamma)g\eta(\Gamma)}\mu_{\wt{G}}$, where $\dGamma(g)=\int_{\eta({}^{p(g)}\Gamma\cap\Gamma)}\,d\mu_{\wt{G}}(x)$.
\end{lemma}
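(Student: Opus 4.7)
The plan is to unfold the definitions and reduce the claim to computing a single value of the density at $g$, using Lemma~\ref{lemma:g normalizes the stabilizer Gamma l^g} to evaluate a certain intersection measure.

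First I would rewrite $\overline{g}=e_{\eta(\Gamma)}*e_{g}*e_{\eta(\Gamma)}=c_{\Gamma}^{2}\,(\chi_{\eta(\Gamma)}\mu_{\wt G})*e_{g}*(\chi_{\eta(\Gamma)}\mu_{\wt G})$ and compute the convolution step by step. The intermediate convolution $(\chi_{\eta(\Gamma)}\mu_{\wt G})*e_{g}$ is the density $w\mapsto \chi_{\eta(\Gamma)}(wg^{-1})$ against $\mu_{\wt G}$, i.e.\ $\chi_{\eta(\Gamma)g}\mu_{\wt G}$. Convolving on the right with $\chi_{\eta(\Gamma)}\mu_{\wt G}$ yields a function of $w$ equal to
\begin{equation*}
c_{\Gamma}^{2}\int_{\wt G}\chi_{\eta(\Gamma)}(xg^{-1})\chi_{\eta(\Gamma)}(x^{-1}w)\,d\mu_{\wt G}(x),
\end{equation*}
which is manifestly left- and right-$\eta(\Gamma)$-invariant and supported on $\eta(\Gamma)g\eta(\Gamma)$. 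By the identification in \cite[II.3.12]{Renard2010} recalled above, it must therefore be a scalar multiple of $\chi_{\eta(\Gamma)g\eta(\Gamma)}\mu_{\wt G}$, and to find the scalar it suffices to evaluate the density at $w=g$.

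Setting $w=g$, the change of variables $x=\gamma g$ (using that $\mu_{\wt G}$ is (left) Haar) turns the integral into an integral of $\gamma$ over the set of $\gamma\in\wt G$ with $\gamma\in\eta(\Gamma)$ and $g^{-1}\gamma g\in\eta(\Gamma)$, that is, over $\eta(\Gamma)\cap {}^{g}\eta(\Gamma)$. Here I would invoke the second assertion of Lemma~\ref{lemma:g normalizes the stabilizer Gamma l^g}, which gives ${}^{g}\eta(\Gamma)\cap\eta(\Gamma)=\eta({}^{p(g)}\Gamma\cap\Gamma)$, so the remaining integral is exactly $d_{\Gamma}(g)$. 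Combining this with the $c_{\Gamma}^{2}$ prefactor from the two idempotents yields $\overline{g}=c_{\Gamma}^{2}\,d_{\Gamma}(g)\,\chi_{\eta(\Gamma)g\eta(\Gamma)}\mu_{\wt G}$.

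The only real subtlety, and the step I would check most carefully, is the use of Lemma~\ref{lemma:g normalizes the stabilizer Gamma l^g} to pass from the set $\{\gamma\in\eta(\Gamma):{}^{g^{-1}}\gamma\in\eta(\Gamma)\}$ to $\eta({}^{p(g)}\Gamma\cap\Gamma)$; everything else is formal manipulation of convolutions and a change of variables. In particular, unimodularity plays no role since we just perform a single left translation and only need the support and value at one point to pin down the scalar.
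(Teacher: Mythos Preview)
Your argument is correct and is essentially the same as the paper's: both unfold the convolution, observe the support and bi-invariance, evaluate the density at a single point in the double coset, and invoke Lemma~\ref{lemma:g normalizes the stabilizer Gamma l^g} to identify the resulting intersection with $\eta({}^{p(g)}\Gamma\cap\Gamma)$. The only cosmetic difference is the order of convolution (you compute $(\chi_{\eta(\Gamma)}\mu_{\wt G})*e_g$ first, the paper computes $e_g*\chi_{\eta(\Gamma)}$ first). One small slip: the substitution $x=\gamma g$ uses \emph{right} invariance of the Haar measure, not left, so your remark that ``unimodularity plays no role'' is not quite right; but since $\wt G$ is unimodular this has no effect on the computation.
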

\begin{proof}
By the definition of $e_{\eta(\Gamma)}$ and the properties of the convolution action (\cite[II.3.10]{Renard2010}), $\overline{g}=c_{\Gamma}^2(\chi_{\eta(\Gamma)}*(e_g*\chi_{\eta(\Gamma)}))\mu_{\wt{G}}$.
Denote $f=\chi_{\eta(\Gamma)}*(e_g*\chi_{\eta(\Gamma)})\in S(\wt{G})$.
For $g_0\in\wt{G}$,
$f(g_0)=\int_{\eta(\Gamma)}\chi_{\eta(\Gamma)}(g^{-1}x^{-1}g_0)\,d\mu_{\wt{G}}(x)$.
Hence $f$ is supported in $\eta(\Gamma)g\eta(\Gamma)$.
For $g_0=x_0gx_0'$, $x_0,x_0'\in\eta(\Gamma)$, the integral equals
$\int_{\eta(\Gamma)}\chi_{\eta(\Gamma)}({}^{g^{-1}}x)\,d\mu_{\wt{G}}(x)$, which
vanishes unless $x\in{}^{g}\eta(\Gamma)\cap\eta(\Gamma)={}^{p(g)}\eta(\Gamma)\cap\eta(\Gamma)$. In the latter case by
Lemma~\ref{lemma:g normalizes the stabilizer Gamma l^g}, $x\in\eta({}^{p(g)}\Gamma\cap\Gamma)$ and the integral becomes
$\dGamma(g)$.
\end{proof}
The following result on the Hecke algebra follows from \cite[V.5.3]{Renard2010} using Lemma~\ref{lemma:props of neighborhoods} (the arguments of \textit{loc. cit.} apply verbatim). Recall that $T_0^+$ was defined in \S~\ref{Central extensions} and $\wt{T_0^+}=p^{-1}(T_0^+)$.
\begin{lemma}\label{lemma:t and t' bar multiply}
For any $k\in \wt{K}$ and $g\in\wt{G}$, $\overline{k}*\overline{g}=\overline{kg}$ and
$\overline{g}*\overline{k}=\overline{gk}$. For any $t,t'\in\wt{T_0^+}$, $\overline{t}*\overline{t'}=\overline{tt'}$.
\end{lemma}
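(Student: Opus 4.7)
The plan is as follows. The first assertion follows from the $K$-equivariance of $\eta|_{\Gamma}$ provided by Lemma~\ref{lemma:props of neighborhoods}. Since $\Gamma$ is normal in $K$ and $\finitecyclic$ is central, conjugation by any $k\in\wt{K}$ on $\wt{G}$ agrees with conjugation by $p(k)\in K$; hence ${}^k\eta(\Gamma)=\eta({}^{p(k)}\Gamma)=\eta(\Gamma)$, i.e., $\wt{K}$ normalizes $\eta(\Gamma)$. Consequently $\eta(\Gamma)k\eta(\Gamma)=k\eta(\Gamma)=\eta(\Gamma)k$ and $e_{\eta(\Gamma)}*e_k=e_k*e_{\eta(\Gamma)}$, so combined with the idempotency of $e_{\eta(\Gamma)}$ we get
\[
\overline{k}*\overline{g}
= e_{\eta(\Gamma)}*e_k*e_{\eta(\Gamma)}*e_g*e_{\eta(\Gamma)}
= e_{\eta(\Gamma)}*e_{kg}*e_{\eta(\Gamma)}
= \overline{kg}.
\]
The identity $\overline{g}*\overline{k}=\overline{gk}$ is proved symmetrically.

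For the second assertion I would follow the argument of \cite[V.5.3]{Renard2010} essentially verbatim, with $\Gamma$ replaced by $\eta(\Gamma)$ throughout. The ingredients used in \emph{loc.~cit.} are the Iwahori factorization of $\Gamma$, the normalization of $\Gamma_{\countingGamma}^{\startGamma,\middlegamma}$ by $T_0$, and the contracting (resp.\ expanding) behavior of $T_0^+$ on $\Gamma_{\countingGamma}^{\startGamma,+}$ (resp.\ $\Gamma_{\countingGamma}^{\startGamma,-}$). These are purely geometric properties of $G$, and they transfer to $\eta(\Gamma)\subset\wt{G}$ once we know that $\eta\rest_{\Gamma_{\countingGamma}^{\startGamma,\pm}}=\varsigma_{U_0^{\pm}}$ and that $\eta\rest_{\Gamma_{\countingGamma}^{\startGamma,\middlegamma}}$ is $M_0$-equivariant, which is precisely Lemma~\ref{lemma:props of neighborhoods}.

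The main obstacle is tracking the scalars $\dGamma(t)$, $\dGamma(t')$, $\dGamma(tt')$ through the convolution. In the linear case Renard's argument rests on the bijection $\Gamma t\Gamma/\Gamma\cong \Gamma/({}^{p(t)^{-1}}\Gamma\cap\Gamma)$ and its analogues for $t'$ and $tt'$. Its covering counterpart is supplied by the identity ${}^{p(t)}\eta(\Gamma)\cap\eta(\Gamma)=\eta({}^{p(t)}\Gamma\cap\Gamma)$ from Lemma~\ref{lemma:g normalizes the stabilizer Gamma l^g}, which lets one read off the intersections of double cosets directly from the linear picture. Once this is granted, the inclusion ${}^{p(tt')}\Gamma_{\countingGamma}^{\startGamma,+}\subseteq {}^{p(t)}\Gamma_{\countingGamma}^{\startGamma,+}\subseteq \Gamma_{\countingGamma}^{\startGamma,+}$ and its mirror for $\Gamma_{\countingGamma}^{\startGamma,-}$ make the Iwahori factorizations of the three relevant intersections compatible, and the volume bookkeeping in Renard's proof goes through without modification to produce $\overline{t}*\overline{t'}=\overline{tt'}$.
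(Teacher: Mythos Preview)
Your proposal is correct and follows essentially the same approach as the paper: the paper's proof consists of the single sentence that the result follows from \cite[V.5.3]{Renard2010} using Lemma~\ref{lemma:props of neighborhoods}, the arguments of \textit{loc.~cit.} applying verbatim. You have simply spelled out in more detail how the $K$-equivariance and the compatibilities of $\eta$ with $\varsigma_{U_0^{\pm}}$ and with $M_0$-conjugation feed into Renard's argument, and your invocation of Lemma~\ref{lemma:g normalizes the stabilizer Gamma l^g} is harmless since that lemma is itself a direct consequence of Lemma~\ref{lemma:props of neighborhoods}.
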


\textbf{The contragredient}. For $\pi\in\mathrm{Rep}(\wt{G})$, denote the contragredient representation by $\pi^{\vee}$. For the basic properties of $\pi^{\vee}$ see \cite[2.13--2.14]{BZ1}.

\textbf{Induction and Jacquet functors}. The induction and the coinvariants\footnote{Called localisation in \cite{BZ2}.} functors $i$ and $r$ of \cite[1.8]{BZ2} were defined for $l$-groups, thus their properties \cite[1.9]{BZ2} remain valid
(note that the tensor product discussed in \cite[1.9g]{BZ2} referred to the setup of \cite[2.16]{BZ1} of commuting groups).

Fix a parabolic subgroup $P=M\ltimes U$ of $G$. 
Define the (normalized) induction functor $i_{\varsigma_U(U),1}$ and the (normalized) Jacquet functor $r_{\varsigma_U(U),1}$ by \cite[1.8]{BZ2} for the triplet
$(\wt{P},\wt{M},\varsigma_U(U))$. Note that 
$i_{\varsigma_U(U),1}:\mathrm{Rep}(\wt{M})\to \mathrm{Rep}(\wt{G})$ and $r_{\varsigma_U(U),1}:\mathrm{Rep}(\wt{G})\to \mathrm{Rep}(\wt{M})$. 

Denote $\Gamma^{+}=\Gamma\cap U$, $\Gamma^{\middlegamma}=\Gamma\cap M$ and $\Gamma^{-}=\Gamma\cap U^-$.
The following results hold:
\begin{itemize}[leftmargin=*]
\item The lemma of Jacquet for admissible representations: assume $P$ is a standard parabolic subgroup and $\pi\in\mathrm{Rep}(\wt{G})$ is admissible. Then, the projection
$\pi\to r_{\varsigma_U(U),1}(\pi)$ defines a surjection $\pi^{\eta(\Gamma)}\to(r_{\varsigma_U(U),1}(\pi))^{\eta(\Gamma^{\middlegamma})}$ (\cite[Th\'{e}or\`{e}me~VI.6.1]{Renard2010}).
\item Jacquet's theorem: $r_{\varsigma_U(U),1}$ preserves admissibility (\cite[Corollary~VI.6.1]{Renard2010}, see also \cite[3.14]{BZ1}).
\item For an admissible $\pi\in\mathrm{Rep}(\wt{G})$ and $t\in\wt{\composante_M^+}$ ``sufficiently small", $\pi^{\eta(\Gamma)}=\Image\pi(\overline{t})\oplus\Ker\pi(\overline{t})$, a decomposition which is independent of
$t$ (\cite[Proposition~VI.6.1]{Renard2010}).
\end{itemize}

We explain the steps needed in order to verify these results for $\wt{G}$. First note that $e_{\eta(\Gamma)}=e_{\eta(\Gamma^-)}*e_{\eta(\Gamma^{\middlegamma})}*e_{\eta(\Gamma^+)}$, where
for any compact subgroup $O<\wt{G}$, $e_O\in S^*(\wt{G})$ is essentially the distribution defined by integrating over $O$ (see \cite[\S~II.3.11]{Renard2010}).
For \cite[Th\'{e}or\`{e}me~VI.6.1]{Renard2010}: Let $t\in \wt{\composante_M^{++}}$.
By Lemma~\ref{lemma:props of neighborhoods}, ${}^{t^{-1}}\eta(\Gamma^-)={}^{t^{-1}}\varsigma_{U^-}(\Gamma^-)=\eta({}^{p(t)^{-1}}\Gamma^-)$ and also
${}^{t^{-1}}\eta(\Gamma^{\middlegamma})=\eta({}^{p(t)^{-1}}\Gamma^{\middlegamma})$. Hence $e_{\eta(\Gamma^-)}*e_t=e_t*e_{\eta(\Gamma^-)}$ and
$e_{\eta(\Gamma^{\middlegamma})}*e_t=e_t*e_{\eta(\Gamma^{\middlegamma})}$ (see \cite[1.26c]{BZ1}).

For the paragraph following the proof of \cite[Corollaire~VI.6.1]{Renard2010} note that for $t\in \wt{\composante_M^+}$, by Lemma~\ref{lemma:props of neighborhoods} $\varsigma_{U}({}^{p(t)^{-1}}\Gamma^+)={}^{t^{-1}}\eta(\Gamma^+)$. Hence
$e_{\eta(\Gamma^+)}*e_t=e_t*e_{\varsigma_{U}({}^{p(t)^{-1}}\Gamma^+)}=
e_t*e_{{}^{t^{-1}}\eta(\Gamma^+)}$
(this is used for \cite[(VI.6.1.2)]{Renard2010}) and by \cite[Lemma~2.33]{BZ1}, the kernel of the map $\pi\to\pi_{\varsigma_U(U)}$ is equal to $\bigcup_{i\in\mathbb{N}}\mathrm{Ker}(e_{\varsigma_{U}({}^{p(t)^{-i}}\Gamma^+)})
=\bigcup_{i\in\mathbb{N}}\mathrm{Ker}(e_{{}^{p(t)^{-i}}\eta(\Gamma^+)})$.

For \cite[Proposition~VI.6.1]{Renard2010} first note that since $\finitecyclic<\cntr_{\wt{G}}$, for any $\epsilon\in \finitecyclic$ and $g\in\wt{G}$ we have $\overline{\epsilon g}=e_{\epsilon}*\overline{g}$.
For $t,t'\in\wt{T_0^+}$, by Lemma~\ref{lemma:t and t' bar multiply} $\overline{t}*\overline{t'}=\overline{tt'}$ and if $tt'=\epsilon t't$ for $\epsilon\in \finitecyclic$, then $\pi(\overline{tt'})=\pi(e_{\epsilon})\pi(\overline{t't})$. Because $\pi(e_{\epsilon})$ acts by a character of $\finitecyclic$, the kernels and images of $\pi(\overline{tt'})$ and $\pi(\overline{t't})$ coincide (see \cite[2.3, 2.5]{BZ1} for the definition of $\pi$ on $S_c^*(\wt{G})$, and also \cite[\S~III.1.3--III.1.4]{Renard2010}).
This completes the verification.

With a minor abuse of notation we denote $i_{U,1}=i_{\varsigma_U(U),1}$ and $r_{U,1}=r_{\varsigma_U(U),1}$, as in the linear case (see \cite[\S~2.3]{BZ2}). 
Using \cite[1.8]{BZ2} and \cite[Th\'{e}or\`{e}me~VI.6.1]{Renard2010} we deduce that the list of properties from \cite[2.3]{BZ2} holds. Briefly,
\begin{itemize}
\item $i_{U,1}$ and $r_{U,1}$ are exact.
\item $r_{U,1}$ is left adjoint to $i_{U,1}$.
\item $i_{U,1}$ and $r_{U,1}$ are transitive with respect to inclusion of standard Levi subgroups.
\item $i_{U,1}$ commutes with taking the contragredient.
\end{itemize}

\textbf{Unitary representations}. Unitary representations are already defined in the context of $l$-groups (see \cite[\S~IV.2]{Renard2010}). Unitary admissible
representations of $\wt{G}$ are semi-simple. Admissible square-integrable modulo $\cntr_{\wt{G}}$ representations are unitary (\cite[\S~IV.3.2]{Renard2010}),
and when they are irreducible, they satisfy the Schur orthogonality relations (\cite[Lemme~IV.3.3]{Renard2010}).

\textbf{Supercuspidal representations}.
These representations are defined as in the linear case by the vanishing of the Jacquet modules (\cite[\S~VI.2.1]{Renard2010}).
A supercuspidal representation with a unitary central character is unitary.

The characterization theorem of Harish-Chandra of supercuspidal representations, according to the support of their matrix coefficients or their restriction to $G^0$, remain valid (\cite[3.20--3.24]{BZ1}, \cite[Th\'{e}or\`{e}me~VI.2.1]{Renard2010} and the following remarks), but $G^0$ is replaced by its preimage $\wt{G^0}$. To wit, first note that in analogy with the linear case, $\wt{G^0}$ is normal in $\wt{G}$, $\wt{G}/\wt{G^0}$ is abelian, $\wt{G}/(\wt{G^0}\cntr_{\wt{G}})$ is finite, $\wt{G^0}\cap \cntr_{\wt{G}}$ is compact and $\wt{K}<\wt{G^0}$ (see e.g., \cite[\S~V.2.6]{Renard2010}).
Second, we use the properties of $\Gamma^{-,\middlegamma,+}$ from Lemma~\ref{lemma:props of neighborhoods} and in particular, that $K$ normalizes $\eta(\Gamma)$, and the equivariance properties of the splittings of unipotent radicals. It also follows that $\pi\in\mathrm{Rep}(\wt{G})$ is supercuspidal if and only if $\pi^{\vee}$ is supercuspidal.

Now we can deduce \cite[Th\'{e}or\`{e}me~VI.2.2]{Renard2010} for $\wt{G}$:
\begin{theorem}\label{theorem:irreducible is admissible}
Any irreducible $\pi\in\mathrm{Rep}(\wt{G})$ is admissible.
\end{theorem}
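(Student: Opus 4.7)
The proof proceeds by induction on the semisimple rank of $G$, following the Harish-Chandra/Jacquet argument of \cite[Th\'{e}or\`{e}me~VI.2.2]{Renard2010}. When the semisimple rank is zero, $G$ has no proper parabolic subgroups, so every irreducible $\pi\in\mathrm{Rep}(\wt{G})$ is supercuspidal. By the characterization theorem recalled just above the statement, $\pi$ restricts to a finite representation of $\wt{G^0}$; since $\wt{G^0}\cap\cntr_{\wt{G}}$ is compact and $[\wt{G}:\wt{G^0}\cntr_{\wt{G}}]<\infty$, the general result on irreducible finite representations of $l$-groups with compact center (\cite[2.42]{BZ1}) gives admissibility of $\pi\rest_{\wt{G^0}}$, and hence of $\pi$.

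For the inductive step, let $\pi$ be irreducible and not supercuspidal, so $r_{U,1}(\pi)\neq 0$ for some proper standard parabolic $P=M\ltimes U$. I first claim that $r_{U,1}(\pi)$ is finitely generated as a $\wt{M}$-module. Since $\pi$ is cyclic (generated by some $v\in\pi^{\eta(\Gamma)}$), and since the Iwasawa decomposition $G=PK$ lifts to $\wt{G}=\wt{P}\wt{K}$ (using $\finitecyclic\subset\wt{P}\cap\wt{K}$), the finiteness of $\wt{K}/(\wt{K}\cap\wt{P})\cdot\eta(\Gamma\cap K)$ yields finitely many $\wt{K}$-translates of $v$ that generate $\pi$ as a $\wt{P}$-module; their images in $r_{U,1}(\pi)$ then generate it as a $\wt{M}$-module. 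Zorn's lemma now furnishes an irreducible quotient $\sigma\in\mathrm{Rep}(\wt{M})$ of $r_{U,1}(\pi)$, which is admissible by the inductive hypothesis, as $M$ has strictly smaller semisimple rank than $G$.

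Frobenius reciprocity (the $r_{U,1}\dashv i_{U,1}$ adjunction listed above) produces a nonzero map $\pi\to i_{U,1}(\sigma)$, which by irreducibility of $\pi$ is an embedding. To conclude, it suffices to verify that $i_{U,1}(\sigma)$ is admissible. For $\countingGamma'\ge\countingGamma_1$ and $\Gamma'=\Gamma_{\countingGamma'}$, the double coset space $\wt{P}\bs\wt{G}/\eta(\Gamma')$ is finite (as a quotient of the compact set $\wt{K}$ by an open subgroup), and evaluation at a set of representatives embeds $i_{U,1}(\sigma)^{\eta(\Gamma')}$ into a finite direct sum of spaces of the form $\sigma^{\eta(\Gamma'\cap M)}$; each summand is finite-dimensional by admissibility of $\sigma$. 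Therefore $i_{U,1}(\sigma)$ is admissible, and so is its subrepresentation $\pi$.

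The main technical point is ensuring that the Iwasawa-based arguments, which underlie both the finite generation of $r_{U,1}(\pi)$ and the admissibility preservation by $i_{U,1}$, transfer to the covering setting. All necessary ingredients are available: the decomposition $\wt{G}=\wt{P}\wt{K}$ follows from $G=PK$ together with $\finitecyclic\subset\wt{K}$, the compactness of $\wt{K}$ is inherited from $K$ by properness of $p$, and Lemma~\ref{lemma:props of neighborhoods} ensures that the splitting $\eta$ interacts correctly with the Iwahori factorization on each $\Gamma_{\countingGamma}$ with $\countingGamma\ge\countingGamma_1$, so that the $\eta(\Gamma)$-invariants machinery of \cite[Ch.~VI]{Renard2010} applies verbatim to $\wt{G}$.
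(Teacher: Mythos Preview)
Your proposal is correct and follows essentially the same approach as the paper: the paper simply asserts that \cite[Th\'{e}or\`{e}me~VI.2.2]{Renard2010} carries over to $\wt{G}$ once the characterization of supercuspidals and the basic properties of $i_{U,1}$, $r_{U,1}$ have been established, and you have written out precisely that inductive Harish-Chandra/Jacquet argument with the covering-group adaptations made explicit. The only minor imprecision is the description of $i_{U,1}(\sigma)^{\eta(\Gamma')}$ as embedding into copies of $\sigma^{\eta(\Gamma'\cap M)}$---the fixed subgroups depend on the chosen representatives in $\wt{K}$---but this is inconsequential since each is compact open in $\wt{M}$ and $\sigma$ is admissible.
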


Next, the result on the embedding of a supercuspidal constituent as a subrepresentation and the analogous statement for a quotient
(\cite[3.28--3.30]{BZ1}, \cite[\S~VI.3.6]{Renard2010}, \cite[Theorem~2.4b]{BZ2}) hold for $\wt{G}$. Note that the character $\psi$ in
\cite[3.29]{BZ1} is a character of $\wt{G}/\wt{G^0}$ and is hence trivial on $\finitecyclic$.
For \cite[3.30]{BZ1} (the proof for $\GL_n$) one uses the center of the Levi subgroup in $\wt{G}$ instead of in $G$.

\textbf{Finiteness theorems}. We start with uniform admissibility:
\begin{theorem}\label{theorem:uniform admissibility}
There exists a constant $c$ such that for any irreducible representation $\pi$ of $\wt{G}$ or $\wt{G^0}$,
the space $\pi^{\eta(\Gamma)}$ is at most $c$-dimensional ($c$ depends on $\eta(\Gamma)$).
\end{theorem}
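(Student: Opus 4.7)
The plan is to adapt Bernstein's Hecke-algebraic proof of uniform admissibility to $\wt{G}$, isolating the single place where the covering structure requires a genuine modification.

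First I would reduce to an algebraic statement. By the bijection recorded in \cite[2.10]{BZ1}, the functor $\pi\mapsto\pi^{\eta(\Gamma)}$ matches irreducible $\pi\in\mathrm{Rep}(\wt{G})$ with $\pi^{\eta(\Gamma)}\neq 0$ to simple $\mathcal{H}_\Gamma$-modules, with coinciding dimensions. It therefore suffices to produce a constant $c=c(\Gamma)$ bounding the dimension of every simple $\mathcal{H}_\Gamma$-module; the analogous statement for $\wt{G^0}$ follows by the same argument applied to a suitable compact open subgroup of $G^0$ inside $\Gamma$, using $\wt{K}<\wt{G^0}$ and the Cartan decomposition for $G^0$.

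Next I would exploit $\wt{G}=\wt{K}\,\wt{T_0^+}\,\wt{K}$, the preimage of the Cartan decomposition $G=KT_0^+K$. Combined with Lemma~\ref{lemma:t and t' bar multiply}, every basis element of $\mathcal{H}_\Gamma$ takes the form $\overline{k_1}*\overline{t}*\overline{k_2}=\overline{k_1tk_2}$ with $k_i\in\wt{K}$ and $t\in\wt{T_0^+}$. Let $\mathcal{H}_{\wt{K}}\subset\mathcal{H}_\Gamma$ be the subalgebra spanned by $\{\overline{k}:k\in\wt{K}\}$ --- finite-dimensional because $\eta(\Gamma)$ has finite index in $\wt{K}$ --- and let $\mathcal{A}$ be the subalgebra generated by $\{\overline{t}:t\in\wt{T_0^+}\}$. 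Then $\mathcal{H}_\Gamma=\mathcal{H}_{\wt{K}}\,\mathcal{A}\,\mathcal{H}_{\wt{K}}$. The covering structure intervenes precisely at the commutativity of $\mathcal{A}$: in the linear case $\mathcal{A}$ is commutative because $T_0$ is abelian, but here $\wt{T_0^+}$ is only commutative modulo $\finitecyclic$, since $tt'=\epsilon t't$ for some $\epsilon\in\finitecyclic$. To recover commutativity I would use the fact $\wt{T_0^{|\finitecyclic|}}<\cntr_{\wt{T_0}}$ from \S\ref{Central extensions}: the subalgebra $\mathcal{A}_0\subset\mathcal{A}$ generated by $\{\overline{t}:t\in p^{-1}((T_0^+)^{|\finitecyclic|})\}$ is then genuinely commutative, and since $(T_0^+)^{|\finitecyclic|}$ has finite index in $T_0^+$, the algebra $\mathcal{H}_\Gamma$ is finitely generated as a right $\mathcal{A}_0$-module by some $N=N(\Gamma)$ elements $h_1,\ldots,h_N$.

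Finally I would invoke Bernstein's standard argument to conclude: for any simple $\mathcal{H}_\Gamma$-module $V$, the action of $\mathcal{A}_0$ factors through the commutative finitely generated algebra $\mathcal{A}_0/\mathrm{Ann}_{\mathcal{A}_0}(V)$, and a Nullstellensatz-type argument produces a common eigenvector $v\in V$ for $\mathcal{A}_0$. Irreducibility then gives $V=\mathcal{H}_\Gamma v=\sum_i h_i \mathcal{A}_0 v=\sum_i\C h_iv$, so $\dim V\le N$, yielding $c=N(\Gamma)$. I expect the commutativity step to be the main obstacle: locating a commutative subalgebra of controlled corank in $\mathcal{H}_\Gamma$ is exactly where the covering theory deviates from the linear case, and the passage to $|\finitecyclic|$-th powers in $T_0$ is the minimal extra ingredient required.
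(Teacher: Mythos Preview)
Your proposal is correct and takes essentially the same approach as the paper: the paper simply cites Bernstein \cite{Bernstein1974} and verifies his hypotheses with the choices $Z=\cntr_{\wt{G}}$, $K_0=\wt{K}$, and $a_1,\ldots,a_l\in(\wt{T_0^+})^{|\finitecyclic|}$, invoking Lemma~\ref{lemma:props of neighborhoods} for conditions $\mathrm{II.d}$ and $\mathrm{II.f}$. Your identification of the passage to $|\finitecyclic|$-th powers in $T_0^+$ as the single genuine modification needed to recover commutativity is exactly the point the paper makes.
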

Indeed the proof from Bernstein \cite{Bernstein1974} is applicable, because the assumptions of \cite{Bernstein1974} are satisfied with
the choices $Z=\cntr_{\wt{G}}$, $K_0=\wt{K}$, and $a_1,\ldots,a_l$ from $(\wt{T_0^+})^{|\finitecyclic|}$ (with the notation of \textit{loc. cit.}) and one uses Lemma~\ref{lemma:props of neighborhoods} to verify
assumptions $\mathrm{II.d}$ and $\mathrm{II.f}$ of \textit{loc. cit.}

Using Theorem~\ref{theorem:uniform admissibility} we obtain the following results.
First we deduce that for each $\Gamma$, there are only finitely many non-isomorphic irreducible representations of
$\wt{G^0}$ which are finite in the sense of \cite[2.40]{BZ1} (called compact in \cite[IV.1.3]{Renard2010}) and have nonzero vectors fixed by $\eta(\Gamma)$. See \cite[4.14--4.15]{BZ1} and \cite[\S~VI.3.4]{Renard2010}. For \cite[4.15]{BZ1} we replace $\cntr_G$ with $\cntr_{\wt{G}}$. Then we obtain
the results on the ``splitting off" of the supercuspidal part from an admissible representation
(\cite[Theorem~4.17]{BZ1}, \cite[\S~VI.3.4--VI.3.6]{Renard2010}, \cite[Theorem~2.4a]{BZ2}). In particular we can deduce the decomposition
statement \cite[\S~VI.3.5]{Renard2010}, but we present it below. In addition, at this point we have already established \cite[Theorem~2.4]{BZ2}, which summarizes several results on the supercuspidal part of an admissible representation.

\begin{theorem}(Howe's theorem)\label{theorem:Howe Theorem}
$\pi\in\mathrm{Rep}(\wt{G})$ is of finite length if and only if it is admissible and finitely generated.
\end{theorem}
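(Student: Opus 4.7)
The plan is to establish the two implications separately, following the template of the linear case (cf.~\cite[Th\'{e}or\`{e}me~VI.6.2]{Renard2010} or \cite[Theorem~4.1]{BZ1}) and invoking the machinery already set up for $\wt{G}$ in earlier sections.

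For the forward direction, I would start from a composition series $0=\pi_0\subsetneq\pi_1\subsetneq\cdots\subsetneq\pi_n=\pi$ with irreducible quotients $\sigma_i=\pi_i/\pi_{i-1}$. Each $\sigma_i$ is admissible by Theorem~\ref{theorem:irreducible is admissible}. Applying the exact functor of $\eta(\Gamma)$-invariants to the short exact sequences $0\to\pi_{i-1}\to\pi_i\to\sigma_i\to0$ and inducting on $i$ yields $\dim\pi^{\eta(\Gamma)}\le\sum_{i=1}^n\dim\sigma_i^{\eta(\Gamma)}<\infty$ for every $\Gamma$, so $\pi$ is admissible. Finite generation is immediate: lifting a cyclic generator of each $\sigma_i$ to $\pi_i$ produces a finite generating set for $\pi$.

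For the reverse direction, assume $\pi$ is admissible and finitely generated. Smoothness and finite generation allow me to choose $\Gamma=\Gamma_{\countingGamma}$ with $\countingGamma\ge\countingGamma_1$ small enough that $\pi^{\eta(\Gamma)}$ contains a finite generating set of $\pi$, so $\pi=\mathcal{H}(\wt{G})\cdot\pi^{\eta(\Gamma)}$; admissibility then makes $\pi^{\eta(\Gamma)}$ a finite-dimensional $\mathcal{H}_{\Gamma}$-module. The key observation is that any proper subrepresentation $\pi'\subsetneq\pi$ must satisfy $(\pi')^{\eta(\Gamma)}\subsetneq\pi^{\eta(\Gamma)}$, since otherwise $\pi'$ would contain $\mathcal{H}(\wt{G})\cdot\pi^{\eta(\Gamma)}=\pi$. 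Combined with the uniform bound from Theorem~\ref{theorem:uniform admissibility} on $\dim\sigma^{\eta(\Gamma')}$ for irreducible constituents $\sigma$, this forces any chain of subrepresentations of $\pi$ to have bounded length, and hence $\pi$ is of finite length.

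The main obstacle is the reverse direction, whose linear proof rests on the interplay between uniform admissibility and the Hecke-module correspondence identifying subrepresentations of $\pi$ generated by their $\eta(\Gamma)$-invariants with $\mathcal{H}_{\Gamma}$-submodules of $\pi^{\eta(\Gamma)}$. For $\wt{G}$, however, every required input is now in place: Theorems~\ref{theorem:irreducible is admissible} and~\ref{theorem:uniform admissibility}, the Hecke algebra $\mathcal{H}_{\Gamma}=e_{\eta(\Gamma)}*\mathcal{H}(\wt{G})*e_{\eta(\Gamma)}$ and the description of the elements $\overline{g}$ from Lemmas~\ref{lemma:e_g} and~\ref{lemma:t and t' bar multiply}, together with the equivariance properties of $\eta$ supplied by Lemma~\ref{lemma:props of neighborhoods} which are needed to handle $\wt{G}$-conjugates of $\eta(\Gamma)$. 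Consequently, the linear proof carries over verbatim to $\wt{G}$ after replacing each compact open subgroup $\Gamma$ appearing in it by its lift $\eta(\Gamma)$.
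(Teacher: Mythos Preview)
Your overall strategy---defer to the linear proof and check that the required inputs transfer to $\wt{G}$---is the same as the paper's, and the forward direction is fine.

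The sketch of the reverse direction, however, does not close. From ``$(\pi')^{\eta(\Gamma)}\subsetneq\pi^{\eta(\Gamma)}$ for every proper $\pi'\subsetneq\pi$'' you cannot conclude that \emph{all} chains in $\pi$ have bounded length: a proper subrepresentation $\pi'$ need not itself be generated by $(\pi')^{\eta(\Gamma)}$, so the argument does not iterate, and an arbitrary irreducible subquotient $\sigma$ of $\pi$ may a priori satisfy $\sigma^{\eta(\Gamma)}=0$. Uniform admissibility by itself does not rescue this step. The linear proofs in \cite[Theorem~4.1]{BZ1} and \cite[\S~VI.6.3]{Renard2010} instead argue by induction on the semisimple rank via the Jacquet functors, and the point that needs checking is that $r_{U,1}$ preserves finite generation. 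That step rests on the Iwasawa decomposition, which is precisely why the paper singles out $\pi(\wt{G})=\pi(\wt{Q})\pi(\wt{K})$ for a standard parabolic $Q$ (immediate from $G=QK$ and surjectivity of $p$) together with the fact that $K$ normalizes $\eta(\Gamma)$ (which does follow from your Lemma~\ref{lemma:props of neighborhoods}\eqref{it:equivariance part 1}). Once those two observations are recorded, the linear argument indeed carries over verbatim; but your list of ingredients omits the first and, more importantly, your sketch never invokes the parabolic structure at all.
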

The proof of this follows as in \cite[Theorem~4.1]{BZ1} and \cite[\S~VI.6.3]{Renard2010}, and note that for a standard parabolic subgroup $Q$ of $G$,
$\pi(\wt{G})=\pi(\wt{Q})\pi(\wt{K})$ and $K$ normalizes $\eta(\Gamma)$.

\begin{theorem}\label{theorem:Jacquet module finite len}
Let $P=M\ltimes U$ be a parabolic subgroup of $G$.
The functor $r_{U,1}$ takes finite length representations of $\wt{G}$ into finite length representations of $\wt{M}$.
\end{theorem}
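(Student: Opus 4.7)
The plan is to invoke Howe's theorem (Theorem~\ref{theorem:Howe Theorem}), which equates finite length $\wt{G}$-modules with those that are admissible and finitely generated. Since Howe's theorem was established for an arbitrary finite central extension of the $F$-points of a connected reductive group, and $\wt{M}=p^{-1}(M)$ is an extension of exactly this kind over $M$ by $\finitecyclic$, it suffices to check that $r_{U,1}(\pi)$ is both admissible and finitely generated whenever $\pi\in\mathrm{Rep}(\wt{G})$ is.

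Admissibility is already available: it is Jacquet's theorem, cited earlier from \cite[Corollary~VI.6.1]{Renard2010}. The substantive step is therefore preservation of finite generation under $r_{U,1}$. For this we use the Iwasawa decomposition $G=PK$. Since $\finitecyclic=\ker p$ is contained in both $\wt{P}$ and $\wt{K}$, we have $\wt{G}=\wt{P}\wt{K}$. Given a finite generating set $S$ for $\pi$ over $\wt{G}$, smoothness of $\pi$ combined with the compactness of $\wt{K}$ forces the $\wt{K}$-span $V$ of $S$ to be finite-dimensional and $\wt{K}$-stable, and to satisfy $\wt{P}\cdot V\supseteq \wt{P}\wt{K}\cdot S=\pi$. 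Passing to the unnormalized Jacquet module, and using $\wt{P}=\wt{M}\varsigma_U(U)$ together with the triviality of the action of $\varsigma_U(U)$ there, the image $\overline{V}$ is finite-dimensional and generates this module as a $\wt{M}$-module. Because $r_{U,1}$ differs from the unnormalized functor only by a twist by the positive character $\delta_P^{-1/2}$ of $\wt{M}$ (pulled back from $M$), $r_{U,1}(\pi)$ is likewise finitely generated.

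Applying Howe's theorem to $\wt{M}$ then concludes the proof. The argument is essentially routine; the only point to verify is that the Iwasawa decomposition and Howe's theorem transfer cleanly to $\wt{G}$ and to its Levi preimages $\wt{M}$, and both transfers follow from the setup of Section~\ref{Central extensions} and the proof of Theorem~\ref{theorem:Howe Theorem}. Accordingly there is no serious obstacle beyond this bookkeeping.
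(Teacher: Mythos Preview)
Your proposal is correct and follows essentially the same route as the paper, which simply refers to \cite[\S~VI.6.4]{Renard2010} and notes that the argument rests on Howe's theorem and Jacquet's lemma; your write-up unpacks exactly that standard argument (Howe $\Rightarrow$ admissible and finitely generated, Jacquet's theorem for admissibility of $r_{U,1}(\pi)$, Iwasawa decomposition $\wt{G}=\wt{P}\wt{K}$ for finite generation, then Howe applied to $\wt{M}$). The only cosmetic point is that the Iwasawa decomposition as stated applies to standard parabolic subgroups with respect to $K$, so one should either assume $P$ standard or note that the general case reduces to this by conjugation.
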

See \cite[\S~VI.6.4]{Renard2010} for the proof, which is in particular based on
Theorem~\ref{theorem:Howe Theorem} and Jacquet's lemma \cite[Th\'{e}or\`{e}me~VI.6.1]{Renard2010} verified above.

The geometric lemma \cite[Theorem~5.2]{BZ2} was proved in the context of arbitrary $l$-groups. Note that the modulus characters $\varepsilon_1$ and $\varepsilon_2$ appearing in the definition of the functor $\Phi_Z$ of \cite[5.1]{BZ2} factor through $p$. The following observation is also needed for \cite[Theorem~5.2]{BZ2}. Let $H<G$ and $g\in G$. Choose $\underline{g}\in\wt{G}$ such that $p(\underline{g})=g$. Then, we have a functor $\mathrm{Rep}(\wt{H})\to\mathrm{Rep}({}^g\wt{H})$ defined by $\pi^{\underline{g}}({}^{\underline{g}}h)=\pi(h)$ ($h\in\wt{H}$). This functor is independent of the choice of $\underline{g}$. See \cite[III.2.1]{Renard2010} for a more general definition. Thus \cite[Lemma~2.12]{BZ2} which is a consequence of \cite[Theorem~5.2]{BZ2} also holds, and we mention that its proof in \cite[6.4]{BZ2} showing that in the setting of \cite[Lemma~2.12]{BZ2} the modulus character $\varepsilon$ defined using $\varepsilon_1$ and $\varepsilon_2$ is the trivial one remains valid (because $\varepsilon$ factors through $p$).

We also comment that there is no natural action of $W$ on $\mathrm{Rep}(M)$, where $M<G$ is a Levi subgroup, except when $M=T_0$. Even in that case there is no
natural action of $W$ on $\mathrm{Rep}(\wt{T_0})$, except when some assumptions are introduced, see McNamara \cite[\S~13.6, p.~310]{McNamara} (and even then the action is not natural).

Using \cite[Theorem~5.2]{BZ2} we deduce that the following results hold: \cite[Theorem~2.8]{BZ2} on the length of a representation $\pi$ parabolically induced from an irreducible
supercuspidal representation; \cite[Theorem~2.9]{BZ2} on the
Jordan--H\"{o}lder series of such representations $\pi$ with associated inducing data; and
\cite[Corollary~2.13]{BZ2} on the Jacquet modules of $\pi$.

\begin{theorem}\label{theorem:parabolic induction functor finite len}
Let $P=M\ltimes U$ be a parabolic subgroup of $G$.
The functor $i_{U,1}$ takes finite length representations of $\wt{M}$ into finite length representations of $\wt{G}$.
\end{theorem}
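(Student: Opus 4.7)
The plan is to reduce to the case of an irreducible supercuspidal inducing representation, where the finite length statement has already been obtained as part of the $\wt{G}$-version of \cite[Theorem~2.8]{BZ2} deduced above via the geometric lemma.

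First, I would use the exactness of $i_{U,1}$ together with the closure of the class of finite length representations under extensions to reduce to the following claim: for every irreducible $\tau\in\mathrm{Rep}(\wt{M})$, the representation $i_{U,1}(\tau)$ is of finite length. Indeed, a composition series $0=\tau_0\subset\tau_1\subset\ldots\subset\tau_n=\tau$ of a finite length representation $\tau$ of $\wt{M}$ maps under the exact functor $i_{U,1}$ to a filtration of $i_{U,1}(\tau)$ whose successive quotients are $i_{U,1}(\tau_k/\tau_{k-1})$, so once each factor is known to have finite length, $i_{U,1}(\tau)$ has finite length as well.

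Next, applying the embedding theorem established above for $\wt{M}$ (the analog of \cite[Theorem~2.4b]{BZ2}), any irreducible $\tau\in\mathrm{Rep}(\wt{M})$ embeds as a subrepresentation of $i_{V,1}^{\wt{M}}(\rho)$ for some parabolic subgroup $Q=L\ltimes V$ of $M$ and some irreducible supercuspidal $\rho\in\mathrm{Rep}(\wt{L})$. Up to conjugation (and using that the lifts $\varsigma_U$ behave equivariantly under $G$-conjugation, so such a conjugation produces the analogous data for a standard parabolic of $G$), I may assume $Q$ is standard in $M$, so that $L\ltimes VU$ is a standard parabolic subgroup of $G$ with Levi $L$ and unipotent radical $VU$. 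By exactness of $i_{U,1}$ I obtain an embedding $i_{U,1}(\tau)\hookrightarrow i_{U,1}(i_{V,1}^{\wt{M}}(\rho))$, and transitivity of parabolic induction (listed among the established properties) identifies the right-hand side with $i_{VU,1}(\rho)$.

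Finally, by the $\wt{G}$-version of \cite[Theorem~2.8]{BZ2}, which is already established in the excerpt as a consequence of the geometric lemma \cite[Theorem~5.2]{BZ2} applied to $\wt{G}$, the representation $i_{VU,1}(\rho)$ has finite length. Any subrepresentation of a finite length representation is of finite length, so $i_{U,1}(\tau)$ has finite length, completing the reduction.

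Given how much has been assembled in earlier sections, I do not expect any real obstacle: the only point requiring any care is ensuring the reduction to a \emph{standard} parabolic of $G$ in the passage from $Q\subset M$ to $L\ltimes VU\subset G$, which is handled by conjugating by an element of $G$ together with the equivariance of the $P$-equivariant liftings $\varsigma_U$ recorded in Section~\ref{Central extensions}. Every other step is a direct invocation of facts already verified for $\wt{G}$.
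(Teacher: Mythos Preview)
Your argument is correct and is essentially the proof of \cite[\S~VI.6.2]{Renard2010} that the paper points to: reduce by exactness to an irreducible inducing representation, embed it (via the subrepresentation theorem for $\wt{M}$) into an induction from a supercuspidal, apply transitivity, and then invoke the finite-length bound for inductions of supercuspidals coming from the geometric lemma \cite[Theorem~5.2]{BZ2} (which is how \cite[Theorem~2.8]{BZ2} is obtained). The only cosmetic difference is that the paper cites \cite[Theorem~5.2]{BZ2} directly rather than the already-deduced \cite[Theorem~2.8]{BZ2}, but the content is the same.
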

This result is proved as in \cite[\S~VI.6.2]{Renard2010}, which relies on \cite[Theorem~5.2]{BZ2}.

\begin{theorem}\label{theorem:generic irreducibility}(Generic irreducibility)
Let $P=M\ltimes U$ be a parabolic subgroup of $G$.
Let $\rho$ be an irreducible representation of $\wt{M}$ and $\chi$ be an unramified character of $M$.
The representation $i_{U,1}(\chi\rho)$ is irreducible for $\chi$ in a Zariski dense open set of the algebraic variety of unramified characters of $M$.
\end{theorem}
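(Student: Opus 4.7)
The plan is to adapt the classical proof to $\wt{G}$ using the geometric lemma (verified via BZ Theorem~5.2 above), Frobenius reciprocity, and the semisimplicity of unitary admissible representations, all established earlier in the paper. The first step is to compute the Jacquet module $r_{U,1}(i_{U,1}(\chi\rho))$ via the geometric lemma: it is a finite length $\wt{M}$-representation, by Theorems~\ref{theorem:parabolic induction functor finite len} and~\ref{theorem:Jacquet module finite len}, carrying a finite filtration whose graded pieces are indexed by appropriate Weyl group double cosets. The graded piece corresponding to the identity element is $\chi\rho$ itself; every other piece has central character on $\cntr_{\wt{M}}$ equal to a nontrivial $W$-translate of that of $\chi\rho$, and depends algebraically on $\chi$.

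Let $\mathcal{U}$ be the complement in the algebraic variety of unramified characters of $M$ of the (finite) union of proper Zariski closed subsets on which a nontrivial $W$-translate of the central character of $\chi\rho$ agrees with the central character of $\chi\rho$; this $\mathcal{U}$ is Zariski dense and open. For $\chi\in\mathcal{U}$, every graded piece $\sigma$ of $r_{U,1}(i_{U,1}(\chi\rho))$ other than the identity piece satisfies $\Hom_{\wt{M}}(\sigma,\chi\rho)=0$ by central-character incompatibility, while the identity piece contributes $\Hom_{\wt{M}}(\chi\rho,\chi\rho)=\C$ by Schur's lemma via Theorem~\ref{theorem:irreducible is admissible}. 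Left-adjointness of $r_{U,1}$ to $i_{U,1}$ from the list of basic properties then gives
\begin{align*}
\Hom_{\wt{G}}(i_{U,1}(\chi\rho), i_{U,1}(\chi\rho)) \cong \Hom_{\wt{M}}(r_{U,1}(i_{U,1}(\chi\rho)), \chi\rho) \cong \C.
\end{align*}

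To upgrade scalar endomorphism algebra to irreducibility, replace $\rho$ by an unramified twist so that its central character is unitary; this merely translates the family $\{\chi\rho\}$ across the character variety and preserves the statement. For $\chi$ unitary and in $\mathcal{U}$, $i_{U,1}(\chi\rho)$ is unitary (normalized induction) and of finite length by Theorem~\ref{theorem:parabolic induction functor finite len}, hence semisimple by the properties of unitary admissible representations of $\wt{G}$ recorded above. Combined with the scalar endomorphism algebra, this forces irreducibility at such $\chi$. Using Theorem~\ref{theorem:uniform admissibility}, the composition length of $i_{U,1}(\chi\rho)$ is upper semi-continuous as $\chi$ varies across the character variety, so its generic value equals the minimum, which is $1$ by the unitary point; hence $i_{U,1}(\chi\rho)$ is irreducible on a Zariski dense open subset. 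The main obstacle is precisely this upgrade step, since a scalar endomorphism algebra alone permits uniserial structure; a genuine semisimplicity input (here, unitarity at an auxiliary point) combined with semicontinuity of composition length in the algebraic family is indispensable.
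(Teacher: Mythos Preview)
Your argument is essentially the one the paper cites (Renard, Th\'{e}or\`{e}me~VI.8.5) for the \emph{supercuspidal} case of $\rho$, and in that case it is correct: the non-identity pieces of $r_{U,1}i_{U,1}(\chi\rho)$ really are twists of $\chi\rho$ by elements of $W$ normalizing $M$ (all other pieces vanish because proper Jacquet modules of $\rho$ do), and a supercuspidal $\rho$ with unitary central character is genuinely unitary, so your semisimplicity input is available.

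For general irreducible $\rho$ there are two gaps. First, the description of the graded pieces is inaccurate: when $\rho$ is not supercuspidal the functors $\Phi_w$ in the geometric lemma involve nonzero proper Jacquet modules of $\rho$, inductions back up, and modulus twists; they are not simply $W$-translates of $\chi\rho$. One can still track the $\composante_M$-action to show $\Hom_{\wt{M}}(r_{U,1}i_{U,1}(\chi\rho),\chi\rho)=\C$ for generic $\chi$, but this requires more than what you wrote. Second, and more seriously, your unitarity step fails: twisting $\rho$ by an unramified character can make its \emph{central character} unitary, but this does not make $\rho$ itself unitary unless $\rho$ is essentially tempered. For a non-tempered $\rho$ (say a Langlands quotient sitting strictly inside a standard module) there may be no $\chi$ for which $\chi\rho$ is unitary, so you have no semisimple point to anchor the semicontinuity argument. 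Incidentally, the semicontinuity of composition length is not a formal consequence of Theorem~\ref{theorem:uniform admissibility}; it requires an argument in families that you have not supplied.

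The paper circumvents both issues by invoking the argument of \cite[\S~5.4]{BernsteinDeligneKazhdan1986} together with the Langlands quotient theorem for covering groups \cite{BJ}: one writes $\rho$ as a Langlands quotient from tempered data on a smaller Levi, proves generic irreducibility in the tempered case (where unitarity is available), and then uses uniqueness of the Langlands quotient for regular parameters to deduce the general case.
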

This follows from the arguments of Bernstein \textit{et al.} \cite[\S~5.4]{BernsteinDeligneKazhdan1986} and using the
Langlands quotient theorem proved for covering groups in \cite{BJ}. Note that the case where $\rho$ is supercuspidal can be proved directly (i.e., without
Langlands' result) by adapting the arguments of \cite[Th\'{e}or\`{e}me~VI.8.5]{Renard2010}. As a consequence we can also deduce, as in
Mui\'{c} \cite{Mu}, the meromorphic continuation of standard intertwining operators using Bernstein's continuation principle (\cite{Banks}).
See Li \cite[Th\'{e}or\`{e}me~2.4.1]{Li2012} (and the ensuing discussion there).

\textbf{Bernstein's decomposition}.
Let $\mathrm{Irr}(\wt{G})$ denote the set of isomorphism classes of irreducible representations of $\wt{G}$, and
$\mathrm{Irr}(\wt{G})_{\mathrm{sc}}\subset\mathrm{Irr}(\wt{G})$ denote the supercuspidal ones. Let $\mathrm{Rep}(\wt{G})_{\mathrm{sc}}$ (resp., $\mathrm{Rep}(\wt{G})_{\mathrm{ind}}$) denote the subcategory of $\mathrm{Rep}(\wt{G})$ of
representations such that all (resp., none) of their subquotients are supercuspidal.

Consider pairs $(\wt{M},\rho)$ and $(\wt{M'},\rho')$ where $M$ and $M'$ are Levi subgroups of $G$, $\rho\in\mathrm{Irr}(\wt{M})_{\mathrm{sc}}$ and $\rho'\in\mathrm{Irr}(\wt{M'})_{\mathrm{sc}}$. These pairs are called associated if there is $g\in G$ such that ${}^g\wt{M}=\wt{M'}$ and $\rho'\cong\rho^{g}$. This defines an equivalence relation and we denote by $(\wt{M},\rho)_G$ the equivalence class
of the pair $(\wt{M},\rho)$. The supercuspidal support map
attaches to each $\pi\in\mathrm{Irr}(\wt{G})$ a class $(\wt{M},\rho)_G$ such that $\pi$ is a constituent of $i_{U,1}(\rho)$ for some parabolic subgroup $P=M\ltimes U$.

Consider the coarser equivalence relation of inertial support: pairs $(\wt{M},\rho)$ and $(\wt{M'},\rho')$ are equivalent if they are associates up to an unramified twist, i.e., ${}^g\wt{M}=\wt{M'}$ and $\rho'\cong\chi\rho^{g}$ for some $g\in G$ and an unramified character $\chi$ of $M'$.
Let $\InertialPairs$ be the set of equivalence classes.
The inertial support map $\InertialSupp:\mathrm{Irr}(\wt{G})\to\InertialPairs$ is defined by composing the supercuspidal support with the inertial support class. For $[\wt{M},\rho]\in\InertialPairs$, let $\mathrm{Rep}(\wt{G})_{[\wt{M},\rho]}\subset\mathrm{Rep}(\wt{G})$ denote the subcategory of representations whose irreducible constituents belong to $\InertialSupp^{-1}([\wt{M},\rho])$.
(See \cite[\S~V.2.7, \S~VI.7.1]{Renard2010} for more details.)

\begin{theorem}\label{theorem:decompositions}
The category $\mathrm{Rep}(\wt{G})$ admits the following decompositions:
\begin{enumerate}[leftmargin=*]
\item $\mathrm{Rep}(\wt{G})=\mathrm{Rep}(\wt{G})_{\mathrm{sc}}\times \mathrm{Rep}(\wt{G})_{\mathrm{ind}}$.
\item $\mathrm{Rep}(\wt{G})_{\mathrm{sc}}=\prod_{[\wt{G},\pi]\in\InertialPairs}\mathrm{Rep}(\wt{G})_{[\wt{G},\pi]}$.
\item\label{BernsteinDecomp} (Bernstein's decomposition) $\mathrm{Rep}(\wt{G})=\prod_{[\wt{M},\rho]\in\InertialPairs}\mathrm{Rep}(\wt{G})_{[\wt{M},\rho]}$.
\end{enumerate}
\end{theorem}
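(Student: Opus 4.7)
The plan is to follow the linear-case treatment of \cite[\S~V.2.7, \S~VI.7]{Renard2010}, checking that each ingredient needed has already been adapted to $\wt{G}$ above.

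For part (1), the splitting of an admissible $\pi \in \mathrm{Rep}(\wt{G})$ into its supercuspidal and non-supercuspidal parts was noted following Howe's theorem. To extend this to arbitrary $\pi$, I would define $\pi_{\mathrm{sc}}$ as the sum of all supercuspidal subrepresentations of $\pi$ (each of which is admissible by Theorem~\ref{theorem:irreducible is admissible}) and observe, using that every finitely generated subrepresentation is of finite length by Theorem~\ref{theorem:Howe Theorem}, that $\pi/\pi_{\mathrm{sc}}$ has no supercuspidal subquotient and that $\pi_{\mathrm{sc}}$ is a direct summand. For part (2), $\mathrm{Rep}(\wt{G})_{\mathrm{sc}}$ is semisimple (supercuspidals with unitary central character are unitary, the general case reducing to this via an unramified twist), and its blocks are indexed by $\mathrm{Irr}(\wt{G})_{\mathrm{sc}}$ grouped into inertial classes $[\wt{G},\pi]$.

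For part (3) I would proceed as in \cite[\S~VI.7]{Renard2010}. First, verify that $\InertialSupp:\mathrm{Irr}(\wt{G})\to\InertialPairs$ is well-defined via uniqueness up to association of the supercuspidal support, which follows from the geometric lemma \cite[Theorem~5.2]{BZ2} and its consequence \cite[Corollary~2.13]{BZ2} verified above. Second, show that each $\mathrm{Rep}(\wt{G})_{[\wt{M},\rho]}$ is a Serre subcategory stable under the relevant $i_{U,1}$ and $r_{U,1}$, and that an admissible finite-length representation meets only finitely many inertial classes (using Theorems~\ref{theorem:Jacquet module finite len} and~\ref{theorem:parabolic induction functor finite len}). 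Third, decompose finite-length admissible representations by induction on length using the disjointness of distinct inertial classes, and then extend to arbitrary $\pi$ as in part~(1).

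The main obstacle is the last step: producing Bernstein-type idempotents that realize the decomposition uniformly across all of $\mathrm{Rep}(\wt{G})$, not just on admissible finite-length objects. In the linear case this is carried out in \cite[\S~VI.7.2]{Renard2010} via progenerators constructed from compactly supported parabolic induction of supercuspidal data, and each step relies only on tools already established above: the geometric lemma, generic irreducibility (Theorem~\ref{theorem:generic irreducibility}), Howe's theorem, Jacquet's lemma, and uniform admissibility (Theorem~\ref{theorem:uniform admissibility}). The only point requiring attention is the identification of unramified characters: an unramified character of $M$ pulls back through $p$ to a character of $\wt{M}$ trivial on $\finitecyclic$, and conversely such characters of $\wt{M}$ descend to characters of $M$; hence the variety of unramified twists is intrinsically the same for $M$ and $\wt{M}$, so the algebraic-family and Zariski-density arguments in Bernstein's proof transfer verbatim to the covering setting.
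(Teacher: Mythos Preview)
Your overall strategy---reduce to Renard's linear treatment and check that each ingredient carries over---matches the paper's. For part~(3) the paper likewise points to \cite[\S~VI.7.1--VI.7.2]{Renard2010}, singling out \cite[Theorem~2.9]{BZ2}, \cite[Lemme~VI.3.6]{Renard2010} on embedding supercuspidal constituents, and \cite[Corollary~2.13c]{BZ2} as the key inputs; your list of tools is longer but compatible, and your remark on unramified characters factoring through $p$ is a useful addition.

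There is, however, a genuine gap in your treatment of parts~(1)--(2). The assertion that $\mathrm{Rep}(\wt{G})_{\mathrm{sc}}$ is semisimple is false: already for $G=F^{*}$ (where every irreducible is supercuspidal) the two-dimensional representation $x\mapsto\left(\begin{smallmatrix}1 & v(x)\\ 0 & 1\end{smallmatrix}\right)$, with $v$ the normalized valuation, is a non-split self-extension of the trivial character. Unitarity of each irreducible (up to twist) does not preclude non-trivial extensions between irreducibles in the same inertial class, and it is precisely such extensions that the blocks $\mathrm{Rep}(\wt{G})_{[\wt{G},\pi]}$ must accommodate. Relatedly, your appeal to Howe's theorem in part~(1) is incomplete: Theorem~\ref{theorem:Howe Theorem} says finite length is equivalent to admissible \emph{and} finitely generated, so a finitely generated subrepresentation of an arbitrary $\pi$ need not be of finite length, and the direct-summand claim for $\pi_{\mathrm{sc}}$ is left unjustified.

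The paper sidesteps both issues by passing first to $\wt{G^0}$, which has compact center. There supercuspidals are compact representations in the sense of \cite[2.40]{BZ1}, hence genuinely projective and injective, and the decompositions of \cite[\S~VI.3.4]{Renard2010} for $\mathrm{Rep}(\wt{G^0})$ follow from uniform admissibility (Theorem~\ref{theorem:uniform admissibility}). One then transfers these to $\wt{G}$ as in \cite[\S~VI.3.5]{Renard2010}, using that $\wt{G}/\wt{G^0}\cntr_{\wt{G}}$ is finite. Replacing your direct arguments for (1)--(2) by this route through $\wt{G^0}$ repairs the gap.
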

The first two statements were verified above (\cite[\S~VI.3.5]{Renard2010}). We note that they are consequences of the similar decompositions for $\mathrm{Rep}(\wt{G^0})$ (\cite[\S~VI.3.4]{Renard2010}), which are based on Theorem~\ref{theorem:uniform admissibility}.
The third assertion follows from \cite[\S~VI.7.2]{Renard2010}, and we point out that the arguments in \cite[\S~VI.7.1--VI.7.2]{Renard2010} rely in particular on
\cite[Theorem~2.9]{BZ2}, \cite[Lemme~VI.3.6]{Renard2010} on embedding supercuspidal constituents and \cite[Corollary~2.13c]{BZ2}.

\textbf{Second adjointness}. We verify Bernstein's second adjointness theorem.
\begin{theorem}\label{theorem:2nd adjointness}
Let $P=M\ltimes U$ be a parabolic subgroup of $G$.
The functor $r_{U^-,1}$ on $\mathrm{Rep}(\wt{G})$ is right adjoint to the functor $i_{U,1}$ on $\mathrm{Rep}(\wt{M})$.
\end{theorem}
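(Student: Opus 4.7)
The plan is to adapt Bernstein's classical proof via Casselman's pairing, as developed in \cite[\S VI.9]{Renard2010}, verifying that each ingredient has been established for $\wt{G}$ in the preceding material. The strategy proceeds in two stages: first one establishes second adjointness for admissible $\pi\in\mathrm{Rep}(\wt{G})$ through formal duality manipulations, and then bootstraps to arbitrary $\pi$ using Bernstein's decomposition.

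The key ingredient to construct is Casselman's pairing: for admissible $\pi$, a canonical isomorphism $r_{U^-,1}(\pi^{\vee})\cong r_{U,1}(\pi)^{\vee}$. The pairing is built from the matrix coefficients $\langle\pi(\overline{t})v,v^{\vee}\rangle$ for $v\in\pi^{\eta(\Gamma)}$, $v^{\vee}\in(\pi^{\vee})^{\eta(\Gamma)}$, and $t\in\wt{\composante_M^+}$: the decomposition $\pi^{\eta(\Gamma)}=\Image\pi(\overline{t})\oplus\Ker\pi(\overline{t})$ verified above implies that these values stabilize as $t$ becomes sufficiently ``small'', and the stabilized value depends only on the images of $v,v^{\vee}$ in $r_{U,1}(\pi),r_{U^-,1}(\pi^{\vee})$. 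Non-degeneracy reduces, through Jacquet's lemma and Jacquet's theorem for $\wt{G}$, to a finite-dimensional assertion that is checked using the Iwahori factorization $\Gamma=\Gamma^{-}\Gamma^{\middlegamma}\Gamma^{+}$ and the equivariance properties of Lemma~\ref{lemma:props of neighborhoods}.

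Granting Casselman's pairing, second adjointness for admissible $\pi$ follows formally: one chains first adjointness of $r_{U,1}$ to $i_{U,1}$, the commutation of $i_{U,1}$ with contragredient, biduality on admissible objects, and Casselman's pairing to produce a natural isomorphism $\Hom_{\wt{G}}(i_{U,1}(\sigma),\pi)\cong\Hom_{\wt{M}}(\sigma,r_{U^-,1}(\pi))$. The passage from admissible to arbitrary $\pi$ is achieved via the structural argument of \cite[\S VI.9]{Renard2010}: Bernstein's decomposition (Theorem~\ref{theorem:decompositions}\eqref{BernsteinDecomp}) reduces the verification to a single block, within which one exploits that $i_{U,1}$ preserves finite generation and that $r_{U^-,1}$, being a coinvariants functor, commutes with filtered colimits. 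A standard categorical argument then extends the adjunction isomorphism from the admissible/finite-length case (controlled via Theorem~\ref{theorem:Howe Theorem}) to all representations in the block.

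I expect the main obstacle to be the construction and non-degeneracy of Casselman's pairing in the covering group setting: while the formal chain of isomorphisms and the categorical extension are routine, assembling the pairing requires careful tracking of how the splittings $\varsigma_U$, $\varsigma_{U^-}$, and $\eta\rest_{\Gamma^{\middlegamma}}$ interact under the Iwahori factorization. The compatibility results of Lemmas~\ref{lemma:props of neighborhoods} and~\ref{lemma:g normalizes the stabilizer Gamma l^g} supply exactly the needed equivariance, so the arguments of \cite[Theorems~VI.9.2, VI.9.6]{Renard2010} should transfer to $\wt{G}$ essentially verbatim.
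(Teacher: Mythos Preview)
Your overall strategy---adapt Renard \cite[\S~VI.9]{Renard2010} via Casselman's pairing, invoking Bernstein's decomposition and the equivariance lemmas already established---matches the paper's. However, you have inverted the logical order and thereby glossed over the actual technical crux. The paper (following Renard) does \emph{not} first prove Casselman's pairing for admissible $\pi$ and then bootstrap categorically; rather, the main ingredient is the Jacquet lemma for \emph{non-admissible} representations \cite[Th\'{e}or\`{e}me~VI.9.1]{Renard2010}, whose proof occupies \cite[\S~VI.9.1--VI.9.3]{Renard2010} and from which Casselman's pairing for arbitrary $\pi$ then follows in \cite[\S~VI.9.6]{Renard2010}. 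In particular, \cite[\S~VI.9.2]{Renard2010} requires ingredients you do not mention: restriction from $\wt{G}$ to $\wt{G^0}$, the geometric lemma and \cite[Corollary~2.13]{BZ2}, uniform admissibility (Theorem~\ref{theorem:uniform admissibility}), and generic irreducibility in the supercuspidal case (Theorem~\ref{theorem:generic irreducibility}, or rather \cite[Th\'{e}or\`{e}me~VI.8.5]{Renard2010}); \cite[\S~VI.9.3]{Renard2010} is where Bernstein's decomposition enters. One further covering-specific point you omit is that for $m\in\wt{M}$ and $t\in\wt{\composante_M}$ one has $e_t*e_m=e_{\epsilon}*e_m*e_t$ for some $\epsilon\in\finitecyclic$, needed in \cite[Proposition~VI.9.6]{Renard2010}.

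Your proposed ``categorical extension'' from admissible to arbitrary $\pi$ via filtered colimits is a plausible alternative route, but it is not what Renard does and would itself require justification (e.g., that finitely generated objects are compact in each Bernstein block, which amounts to local Noetherianity). If you intend to follow \cite[\S~VI.9]{Renard2010} as you say, you should identify the non-admissible Jacquet lemma as the heart of the matter and list the inputs above.
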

The theorem is equivalent to Casselman's pairing: For any $\pi\in\mathrm{Rep}(\wt{G})$,
$r_{U^-,1}(\pi^{\vee})\cong r_{U,1}(\pi)^{\vee}$. For the proof we follow \cite[\S~VI.9.6]{Renard2010} (where the equivalence was also shown).

The main ingredient is the lemma of Jacquet for non-admissible representations \cite[Th\'{e}or\`{e}me~VI.9.1]{Renard2010}. The proof occupies
\cite[\S~VI.9.1--VI.9.3]{Renard2010}. For \cite[VI.9.1]{Renard2010} we apply the same observations used for the proofs in \cite[\S~VI.6.1]{Renard2010},
e.g. Lemma~\ref{lemma:props of neighborhoods} and Lemma~\ref{lemma:t and t' bar multiply}.
For \cite[Remarques~VI.9.1 (2)--(3)]{Renard2010} we use Lemma~\ref{lemma:props of neighborhoods}.

Section \cite[\S~VI.9.2]{Renard2010} relies on the restriction of
representations from $G$ to $G^0$ (\cite[\S~VI.4.1, Proposition~VI.3.2]{Renard2010}, see \cite[3.26, 3.29]{BZ1}) which applies here to
$\wt{G}$ and $\wt{G^0}$, on \cite[Theorem~5.2]{BZ2} and \cite[Corollary~2.13]{BZ2}, Theorem~\ref{theorem:uniform admissibility},
and Theorem~\ref{theorem:generic irreducibility} (in fact \cite[Th\'{e}or\`{e}me~VI.8.5]{Renard2010} which is just the supercuspidal case).

Section \cite[\S~VI.9.3]{Renard2010} relies on Theorem~\ref{theorem:decompositions}\eqref{BernsteinDecomp}.

The remaining arguments in \cite[\S~VI.9.6]{Renard2010} are again adapted as in \cite[\S~VI.9.1]{Renard2010}.
For \cite[Proposition~VI.9.6]{Renard2010}, note that if $m\in \wt{M}$ and $t\in \wt{\composante_M}$, $e_t*e_m=e_{\epsilon}*e_m*e_t$ for some $\epsilon\in \finitecyclic$. This completes
the proof of Theorem~\ref{theorem:2nd adjointness}.

\subsection*{Acknowledgments}
We are happy to thank Dubravka Ban, Fan Gao, Chris Jantzen, Erez Lapid and David Renard for valuable discussions.
We are also grateful to the referee for numerous helpful remarks, which significantly improved the presentation.

\def\cprime{$'$} \def\cprime{$'$} \def\cprime{$'$}

\end{document}